\theoremstyle{definition}
\newtheorem{example}{Example}
\theoremstyle{theorem}
\newtheorem{lemma}{Lemma}
\theoremstyle{definition}
\newtheorem{remark}{Remark}
\theoremstyle{definition}
\newtheorem{corollary}{Corollary}
\theoremstyle{Theorem}
\newtheorem{theorem}{Theorem}
\theoremstyle{definition}
\theoremstyle{Theorem}
\begin{document}

\title{Large Deviations for Heavy-Tailed Factor Models}
\author{Boualem Djehiche\thanks{boualem@math.kth.se}~ and Jens Svensson\thanks{jenssve@math.kth.se}\\ \\Department of Mathematics \\ Royal Institute of Technology\\ SE-100 44 Stockholm} %\textit{Preliminary Draft}} 
\maketitle

%\authornames{ B. DJEHICHE AND J. SVENSSON}
 % insert the authors here for use in running head
%\shorttitle{Large Deviations for Heavy-Tailed Factor Models} % insert short title here for use in running head

% Put any of your own definitions here.

%\numberwithin{equation}{section}  % If you number theorems, etc. within sections,
                                   % then please uncomment this line to number
                                   % equations with sections too.

%\begin{document}

%\title{Large Deviations for Heavy-Tailed Factor Models} % insert title - use \\ if it requires more than one line.

% Affiliation is just the name of your university or institution
%\authorone[Royal Institute of Technology]{BOUALEM DJEHICHE AND JENS SVENSSON}
%\authortwo[Royal Institute of Technology]{JENS SVENSSON}
%\addressone{Department of Mathematics, Royal Institute of Technology, SE-100 44 Stockholm, Sweden} % Your postal address goes here.

\begin{abstract}
We study large deviation probabilities for a sum of dependent random variables from a heavy-tailed factor model, assuming that the components are regularly varying. We identify conditions where both the factor and the idiosyncratic terms contribute to the behaviour of the tail-probability of the sum. A simple conditional Monte Carlo algorithm is also provided together with a comparison between the simulations and the large deviation approximation. We also study large deviation probabilities for stochastic processes with factor structure. The processes involved are assumed to be L\'evy processes with regularly varying jump measures. 
Based on the results of the first part of the paper, we show that large deviations on a finite time interval are due to one large jump that can come from either the factor or the idiosyncratic part of the process.

%according to the results from the first part of the paper.

\end{abstract}

\textbf{Keywords:} Large deviations, heavy tails, regular variation, factor models. % insert keywords separated by a semicolon

\textbf{AMS 2000 Subject Classification:} Primary: 60F10 Secondary: 60G50.
%\ams{60F10}{60G50} % insert the primary Maths Subject Classification number in the first bracket
         % and the secondary ams number(s) in the second bracket
         % e.g. \ams{60E20}{49G03;49F10}

\section{Introduction}
This paper is devoted to the study of large deviations of sums of dependent random variables and processes, where the dependence is generated through a factor model. Factor models are important in both financial theory and practice, because this form of structural dependence is both realistic and tractable.
From a theoretical point of view, different types of factor models give intuition to economic phenomena: the Capital Asset Pricing Model (CAPM) and the Arbitrage Pricing Theory (APT) are examples where factor structure is a fundamental property (see e.g. Cochrane (2001)). From an applied point of view, factor models are useful as approximations of other models and for dimension reduction. In many cases, reducing the number of dimensions of a model can make it tractable in practice. 

\medskip
Often, the random variables or vectors involved are assumed to be normally distributed, or at least \textit{light-tailed}. A random variable $X$ is called light-tailed if its tail-distribution $P(X>\lambda)$ tends to zero faster than $e^{-c\lambda}$ for some $c>0$.
 
Empirical studies of financial time series often conclude that data are \textit{heavy-tailed}, i.e.~the previous condition is not satisfied  (see e.g.~Cont (2001) for a review of some of these empirical findings). Consequently, light-tailed factor models may not be suited for describing the tail-properties of financial data. Therefore, it is of interest to incorporate the assumption of heavy tails into a factor model. 
As we will see, heavy-tailed factor models display qualitatively different behaviour from standard light-tailed models.

\medskip
In the first part of the paper, we restrict ourselves to the class of regularly varying random variables and vectors. This class is fairly rich and includes popular distributions such as Pareto and student's \textit{t}. See e.g.~Embrechts et al.~(1997) and Resnick (2004) for treatments of the univariate and multivariate case, respectively. 
\medskip
A random variable $X$ is regularly varying if there exist $\alpha\geq 0$ and $p\in [0,1]$ such that 
\begin{align}
&\lim_{x\to\infty}\frac{P({X}>tx)}{P({|X|}>x)}=pt^{-\alpha}\quad\text{and}\quad \lim_{x\to\infty}\frac{P({X}\leq-tx)}{P({|X|}>x)}=(1-p)t^{-\alpha},
\end{align}
for $t>0$.
We refer to $p$ as the tail balance parameter. The definition can also be formulated in terms of sequences instead of a continuous parameter $x$.
Clearly, regularly varying random variables are heavy-tailed according to the above definition.

\medskip
Since we will allow for dependence between factors, we also need the corresponding class of random vectors.
For random vectors, regular variation is defined through convergence of measures.
Specifically, an $\mathbb{R}^d$-valued random vector $\mathbf{X}$ is said to be regularly varying if 
there exist a sequence $a_n\to\infty$ and a measure $\mu$ on $\mathbb{R}^d$ such that
\begin{align}
\lim_{n\to\infty}n{P(a_n^{-1}\mathbf{X}\in B)}=\mu(B) \label{IntroMvRV}
\end{align}
%\begin{align}
%\lim_{n\to\infty}\frac{P(\mathbf{X}\in xB)}{P(|\mathbf{X}|>x)}=\mu(B) \label{IntroMvRV}
%\end{align}
and $\mu(B)<\infty$ for every Borel set $B\subset \mathbb{R}^d$  %relatively compact $B\in\mathcal{B}(\overline{\mathbb{R}}^d\backslash \{\mathbf{0}\})$ 
satisfying $\mathbf{0}\notin \overline{B}$ and $\mu(\partial B)=0$, where $\overline{B}$ and $\partial B$ denote the closure and boundary of $B$, respectively. We write  $\mathbf{X}\in \textrm{RV}(\alpha,\mu)$. See Hult and Lindskog (2006) for details about equivalent definitions of regular variation.

\medskip
Using this class of distributions, we define a factor model for the vector $(R_1,\ldots, R_n)$ by letting
\begin{align}
R_i=\sum_{j=1}^d L_{ij}F_j+\varepsilon_i, \quad i=1,\ldots,n,\label{IntroModel}
\end{align}
where $\mathbf{F}_d=(F_1,\ldots, F_d)^{\mathrm{T}}$ is a regularly varying random vector, $\varepsilon_i$ are i.i.d.~regularly varying random variables and $\mathbf{L}_i=(L_{i1},\ldots,L_{id})$ are i.i.d.~random vectors.% with lighter tails than $\mathbf{F}_d$ and $\varepsilon_1$. 
All the random variables and vectors involved are assumed to be independent. The components of $\mathbf{F}_d$ are referred to as factors, $L_{ij}$ as factor loadings and $\varepsilon_i$ as idiosyncratic components.

A sum of variables from this model can be expressed as
\begin{align}
S_n=\sum_{i=1}^n R_i=\sum_{i=1}^{n}\sum_{j=1}^{d} L_{ij} F_j+\sum_{i=1}^n \varepsilon_i.
% =\sum_{j=1}^d \sum_{i=1}^n L_{ij} F_j+\sum_{i=1}^n \varepsilon_i. 
\label{IntroSum}
\end{align}
%This sum may be e.g.~be interpreted as a portfolio, but we only focus on the properties implied by the factor structure. Consequently, we do not allow for different portfolio weights or different distributions of the idiosyncratic components, although this is easily incorporated.
%Two independent parts constitute the sum $S_n$, and 
\noindent
The tail probability $P(S_n>\lambda)$ exhibits different asymptotic behaviour depending on the relation between the tail indices of the independent sums $\sum_{i=1}^n \varepsilon_i$ and $\sum_{i=1}^n \sum_{j=1}^d L_{ij} F_j$.

\medskip
Recall that (see e.g.~Embrechts et al.~(1997)) if two independent regularly varying random variables $X$ and $Y$ have different tail indices, $0<\alpha_X<\alpha_Y$, then
\begin{align*}
P(X+Y>\lambda)\sim P(X>\lambda),\quad \textrm{as } \lambda\to\infty, 
\end{align*}
which means that the random variable with heaviest tail, or smallest tail index, dominates the tail probability of the sum. 
On the other hand (see e.g.~Embrechts et al.~(1997)), if $X_1, X_2, \ldots$ are i.i.d.~regularly varying random variables with tail balance parameter $p$, we have with $n$ fixed, 
\begin{align}
P(\sum_{i=1}^nX_i>\lambda)\sim npP(|X_1|>\lambda), \quad \textrm{as } \lambda\to\infty,\label{IntroAsymptotic}
\end{align}
where $a(x)\sim b(x)$ as $x\to\infty$ denotes $\lim_{x\to\infty} a(x)/b(x)=1$. 
%On the other hand, if two regularly varying random variables $X$ and $Y$ have different tail indices, $0<\alpha_x<\alpha_y$, then
%\begin{align*}
%P(X+Y>\lambda)\sim P(X>\lambda),\quad \textrm{as } \lambda\to\infty, 
%\end{align*}
%which means that the random variable with heaviest tail, or smallest tail index, dominates the tail probability of the sum. 
%A natural extension is to determine the behaviour of the tail probability when $n\to\infty$. 
In fact, Relation \eqref{IntroAsymptotic} is still valid when $n\to\infty$ if $\lambda=\lambda_n$ increases sufficiently fast. Asymptotic probabilities of this kind are called large deviation probabilities. %, and in the case of regularly varying random variables, they behave as the asymptotic probabilities above as long as $\lambda_n$ increases sufficiently fast. 

\medskip
For an appropriate choice of $\lambda_n$ we have
\begin{align}\label{IntroLD}
P(\sum_{i=1}^n X_i>\lambda_n)\sim npP(|X_1|>\lambda_n),\quad \textrm{as }n\to\infty.
\end{align}
We refer to Mikosch and Nagaev (1998) for details about the choice of sequence $\lambda_n$ under different distributional assumptions. 

\medskip
In this paper we consider regularly varying random variables with tail indices larger than 2, for which it was shown in Nagaev (1970) that if $\lambda_n$ is such that $\sqrt{n \log n}/\lambda_n\to 0$ as $n\to\infty$, then Relation \eqref{IntroLD} holds. Similarly, for tail probabilities of the sum $S_n$ given by \eqref{IntroSum}, we have two different situations. As $n\to\infty$ with $\lambda_n\sim n$, the tail behaviour of $S_n$ is determined by the tail probability of the sum $\sum_{i=1}^n \sum_{j=1}^d L_{ij} F_j$, whereas, when $\lambda\to\infty$ with $n$ fixed, it is determined by the sum with the heaviest tail.

\medskip\noindent
To obtain an expression where both sums contribute to the tail behaviour of $S_n$, we study the influence of the choice of $\lambda_n$ on the behaviour of large deviation probabilities of the form $P(S_n>\lambda_n)$, when $n\to\infty$. In the main result of the paper, Theorem \ref{LDTheorem}, we identify conditions under which there exists a sequence $\lambda_n$ such that both sums contribute to the large deviation probability of $S_n$. In particular, $\varepsilon_i$ should have heavier tail than $\mathbf{F}_d$. We also show that the i.i.d.~random vectors $\mathbf{L}_i$ only contribute through their expectations.

\medskip\noindent
Using the obtained results, we also study sums of heavy-tailed processes with factor structure. We adapt results from Hult and Lindskog (2005) to our case and derive a large deviation principle for our processes on $D([0,1], \mathbb{R}$), the space of real-valued c{\`a}dl{\`a}g functions on $[0,1]$. Here we note that extreme events during a finite time interval occur due to one large jump. Moreover, using \ref{LDTheorem}, we conclude that this large jump can come from either the factor or the idiosyncratic part of the process.

\medskip
The paper is organised as follows. In Section 2, we derive a large deviation result for sums of dependent random variables from a heavy-tailed factor model. Section 3 contains a numerical example where, under some further assumptions on the factor model, we derive a conditional Monte Carlo algorithm. Moreover, we compare the simulation results with the analytical approximations. 
Section 4 deals with large deviation results for heavy-tailed L\'evy processes with factor structure. Some proofs and technical results are collected in Section 5. 
 
%%%%%%%%%%%%%%%%%%%%%%%%%End of Introduction%%%%%%%%%%%%%%%%%%%%%%%%%%%%%%%%%%%%%%%

%%%%%%%%%%%%%%%%%%%%%%%%%%Part I %%%%%%%%%%%%%%%%%%%%%%%%%%%%%%%%%%%%%%%%%%%

\section{Large Deviations for Heavy-Tailed Factor Models}

In this section we investigate under which conditions both the factors and the idiosyncratic components in (\ref{IntroSum}) contribute to the large deviation probability $P(S_n> \lambda_n)$ as $n\to\infty$.

\medskip Consider the model given by (\ref{IntroModel}), which in matrix notation reads
\begin{align}
\mathbf{R}_n=\mathbf{\Lambda}_n\mathbf{F}_d+\boldsymbol{\varepsilon}_n\label{FactorModel},
\end{align}
where $\mathbf{\Lambda}_n$ denotes the matrix $(\mathbf{L}_i)_{i=1}^n$.
We assume that the vector of risk-factors $\mathbf{F}_d$ is regularly varying i.e.
\begin{align*}
\lim_{n\to\infty}{nP(a_n^{-1}\mathbf{F}_d\in B)}=\mu_{}(B),
\end{align*}
for Borel sets $B\subset \mathbb{R}^d$ satisfying $\mathbf{0}\notin \overline{B}$ and $\mu(\partial B)=0$, where $\mu$ is given and has tail index $\alpha_F>2$.
Furthermore, the rows of the matrix of factor loadings $\mathbf{\Lambda}_n$, $\mathbf{L}_i$, are independent copies of a random vector $\mathbf{L}=(L_1,\ldots,L_d)$ with $\mathbb{E}|L_j|^{\alpha_F+\delta}<\infty$ for $j=1,\ldots,d$ and some $\delta>0$.
\noindent The elements $\varepsilon_1,\ldots, \varepsilon_n $ of the idiosyncratic term are  i.i.d.~and regularly varying random variables with tail index $\alpha_{\varepsilon}>2$. 

\medskip
Denoting $S_{n,j}^{L}=\sum_{i=1}^n L_{ij}$, we get
\begin{align}
S_n=\sum_{j=1}^d S_{n,j}^{L}F_j+\sum_{i=1}^n \varepsilon_i \label{sumdecomp}.
\end{align}

\noindent By the law of large numbers, 
\begin{align*}
\lim_{n\to\infty}\frac{ S_{n,j}^L}{n}=\lim_{n\to\infty}\frac{1}{n}\sum_{i=1}^n L_{ij}= \mathbb{E}L_j\,\,\,\textrm{a.s.},
\end{align*} 
as $n\to\infty$, which suggests that
\begin{align}
P(\sum_{j=1}^d S_{n,j}^{L}F_j>\lambda_n x)\sim P(\sum_{j=1}^d (\mathbb{E} L_j) F_j>\frac{\lambda_n}{n} x),\quad \text{as}\,\, n\to\infty.
\end{align}

\noindent To verify this, we use Lemmas \ref{MVLimit} and \ref{MGIneq}, below. %Lemma \ref{MVLimit} is a modified multivariate version of Breiman's Lemma. 
\begin{lemma}[]\label{MVLimit}
Let $\mathbf{X}$ be a $d\times 1$ regularly varying random vector, $\mathbf{X}\in \textrm{RV}(\alpha, \mu)$ and
let $\mathbf{A}_n\neq\mathbf{0}$ be a sequence of $1\times d$ random vectors independent of $\mathbf{X}$ such that $\mathbf{A}_n\to\mathbf{A}\neq\mathbf{0}$ a.s.,~as $n\to\infty$ and %$\lim_{n\to\infty} A_n=A\neq0$ a.s. 
$\mathbb{E}(\sup_n |\mathbf{A}_n|_{\infty})^{\alpha+\delta}<\infty$, where, $|\mathbf{A}|_{\infty}=\sup_{|\mathbf{x}|=1}|\mathbf{A}\mathbf{x}|$.

\medskip\noindent
Then, for $0<\lambda_n\uparrow \infty$ and $x>0$, we have
\begin{align*}
\lim_{n\to \infty}\frac{P(\mathbf{A}_n \mathbf{X}  >\lambda_n x)}{P(|\mathbf{X}|>\lambda_n)} =  x^{-\alpha} \mu\left( \mathbf{A}^{-1} (1,\infty)\right).
\end{align*}
\end{lemma}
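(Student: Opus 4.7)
\medskip
\textbf{Proof plan.} The approach is to condition on $\mathbf{A}_n$, which by independence gives
\[
\frac{P(\mathbf{A}_n\mathbf{X} > \lambda_n x)}{P(|\mathbf{X}| > \lambda_n)} = \mathbb{E}\bigl[\Psi_n(\mathbf{A}_n)\bigr], \qquad \Psi_n(\mathbf{a}) := \frac{P(\mathbf{a}\mathbf{X} > \lambda_n x)}{P(|\mathbf{X}| > \lambda_n)},
\]
and then to combine an a.s.~pointwise limit of $\Psi_n(\mathbf{A}_n)$ with a uniform integrable envelope, so that dominated convergence delivers the claim.

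\medskip
I would begin with the deterministic case. For a fixed $\mathbf{a} \neq \mathbf{0}$, the half-space $B_{\mathbf{a},x} = \{\mathbf{y}:\mathbf{a}\mathbf{y}>x\}$ is bounded away from $\mathbf{0}$, and its boundary is a scaled copy of $\partial B_{\mathbf{a},1}$; the scaling $\mu(sB) = s^{-\alpha}\mu(B)$ combined with the finiteness of $\mu$ on $\{|\mathbf{y}|>1\}$ forces $\mu(\partial B_{\mathbf{a},x}) = 0$ for every $x>0$. Applying the definition of regular variation to $\lambda_n B_{\mathbf{a},x}$ then yields $\Psi_n(\mathbf{a}) \to x^{-\alpha}\mu(\mathbf{a}^{-1}(1,\infty))$. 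To upgrade this to $\Psi_n(\mathbf{A}_n(\omega))$ at a.s.~$\omega$, I would sandwich: for any $\eta\in(0,x)$,
\[
P(\mathbf{A}\mathbf{X}>\lambda_n(x+\eta)) - R_n \leq P(\mathbf{A}_n\mathbf{X}>\lambda_n x) \leq P(\mathbf{A}\mathbf{X}>\lambda_n(x-\eta)) + R_n,
\]
where $R_n = P(|(\mathbf{A}_n-\mathbf{A})\mathbf{X}| > \lambda_n\eta) \leq P(|\mathbf{X}| > \lambda_n\eta/|\mathbf{A}_n-\mathbf{A}|_\infty)$. Since $|\mathbf{A}_n-\mathbf{A}|_\infty \to 0$ a.s., regular variation of $|\mathbf{X}|$ forces $R_n/P(|\mathbf{X}|>\lambda_n) \to 0$ pointwise; the deterministic limit pins the main terms, and letting $\eta\downarrow 0$ gives $\Psi_n(\mathbf{A}_n) \to x^{-\alpha}\mu(\mathbf{A}^{-1}(1,\infty))$ a.s.

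\medskip
For the integrable envelope I would invoke Potter's inequality applied to $P(|\mathbf{X}|>\cdot)$. Using $P(\mathbf{a}\mathbf{X}>\lambda_n x)\leq P(|\mathbf{X}|>\lambda_n x/|\mathbf{a}|_\infty)$ and choosing $\epsilon\in(0,\delta)$, I expect a constant $C=C(x,\epsilon)$ for which
\[
\Psi_n(\mathbf{a}) \leq C\bigl(1+|\mathbf{a}|_\infty^{\alpha+\epsilon}\bigr)
\]
holds uniformly in $n$ large and $\mathbf{a}$. The hypothesis $\mathbb{E}(\sup_n |\mathbf{A}_n|_\infty)^{\alpha+\delta}<\infty$ then provides an integrable majorant for $\Psi_n(\mathbf{A}_n)$, and dominated convergence concludes the argument.

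\medskip
The principal obstacle is producing this envelope uniformly in both $\mathbf{a}$ and $n$: Potter's inequality gives its two-sided power bound only in the tail regime $\lambda_n x/|\mathbf{a}|_\infty \geq s_0$, which excludes the case where $|\mathbf{a}|_\infty$ is comparable to or exceeds $\lambda_n$. On that complementary regime I would instead bound $1/P(|\mathbf{X}|>\lambda_n)$ monotonically by $1/P(|\mathbf{X}|>c|\mathbf{a}|_\infty)$ for a suitable constant $c$, and then apply Potter at scale $|\mathbf{a}|_\infty$. Patching the two regimes yields the single envelope above; this is the only step that substantively uses the moment hypothesis on $\sup_n|\mathbf{A}_n|_\infty$.
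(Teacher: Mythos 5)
Your argument is correct, but it follows a genuinely different route from the paper's. The paper never conditions on $\mathbf{A}_n$ pointwise: instead it decomposes $\mathbf{X}=\mathbf{X}^+-\mathbf{X}^-$, sandwiches $\mathbf{A}_n\mathbf{X}$ between $(\sup_{k>M}\mathbf{A}_k,\inf_{k>M}\mathbf{A}_k)(\mathbf{X}^+,-\mathbf{X}^-)^{\mathrm{T}}$ and the analogous expression with $\sup$ and $\inf$ exchanged, so that the bounding probabilities involve a single random vector that no longer depends on $n$. It then verifies regular variation of $(\mathbf{X}^+,-\mathbf{X}^-)^{\mathrm{T}}$ by a continuous-mapping argument, applies the multivariate Breiman lemma of Basrak--Davis--Mikosch once to each bound, and finally lets $M\to\infty$ using dominated convergence, with the moment hypothesis on $\sup_n|\mathbf{A}_n|_\infty$ entering both in Breiman's hypothesis and in the domination. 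Your route --- conditioning to get $\mathbb{E}\Psi_n(\mathbf{A}_n)$, an a.s.\ pointwise limit via the perturbation bound $R_n\leq P(|\mathbf{X}|>\lambda_n\eta/|\mathbf{A}_n-\mathbf{A}|_\infty)$, and a Potter-type integrable envelope --- is essentially a direct proof of a Breiman lemma with varying random coefficients, and is more self-contained in that it uses only one-dimensional regular-variation estimates and the defining vague convergence, avoiding both the positive/negative-part decomposition and the cited multivariate lemma. The price is the two-regime envelope: you correctly identify that Potter's bound only covers $\lambda_n x/|\mathbf{a}|_\infty\geq s_0$, and your patch (bounding $1/P(|\mathbf{X}|>\lambda_n)$ by $1/P(|\mathbf{X}|>s_0|\mathbf{a}|_\infty/x)$ on the complement and applying Potter at scale $|\mathbf{a}|_\infty$) does close it, provided you note that the resulting bound need only hold for all $n\geq n_0$, which suffices for dominated convergence. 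Your observation that $\mu(\partial\{\mathbf{y}:\mathbf{a}\mathbf{y}>x\})=0$ follows from homogeneity plus finiteness of $\mu$ on sets bounded away from the origin is also a clean way to justify the deterministic limit, a point the paper handles implicitly through the continuity-set hypothesis of Breiman's lemma.
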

\begin{proof}See Section \ref{Proofs}.
\end{proof}

\begin{lemma}\label{MGIneq}
Let $X_i$, $i=1,2,\ldots$ be a sequence of i.i.d.~random variables ${E}|X_1|^{r}<\infty$, $r>1$. Then
${E}(\sup_k |\sum_{i=1}^k X_i|/k)^{r}<\infty$.
\end{lemma}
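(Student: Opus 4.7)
The plan is to exploit the fact that the sequence $(S_n/n)_{n\geq 1}$ is a backward martingale with respect to an appropriate decreasing filtration, and then apply Doob's $L^r$-maximal inequality.

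First, since $r>1$ we have $\mathbb{E}|X_1|<\infty$, so the common mean is well-defined. Set $\mathcal{G}_{-n}=\sigma(S_n, X_{n+1}, X_{n+2},\ldots)$. This is a decreasing family of $\sigma$-algebras because, for $m>n$, the sum $S_m$ is a measurable function of $S_n$ and $X_{n+1},\ldots, X_m$. By the exchangeability of $X_1,\ldots, X_{n+1}$, one has $\mathbb{E}[X_i\mid \mathcal{G}_{-(n+1)}]=S_{n+1}/(n+1)$ for every $1\leq i\leq n+1$; summing the first $n$ of these identities gives
\begin{align*}
\mathbb{E}[S_n/n\mid \mathcal{G}_{-(n+1)}]=S_{n+1}/(n+1),
\end{align*}
so $(S_n/n,\mathcal{G}_{-n})_{n\geq 1}$ is a backward martingale whose ``terminal'' variable, in the direction of decreasing index, is $S_1=X_1$.

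Second, for each fixed $N\geq 1$, reverse time to obtain a forward martingale $(\tilde{M}_k)_{k=1}^{N}$ defined by $\tilde{M}_k=S_{N-k+1}/(N-k+1)$, with filtration $\tilde{\mathcal{F}}_k=\mathcal{G}_{-(N-k+1)}$; its terminal value is $\tilde{M}_N=X_1$. Doob's $L^r$-maximal inequality then yields
\begin{align*}
\mathbb{E}\Bigl[\max_{1\leq n\leq N}|S_n/n|^r\Bigr] \leq \Bigl(\frac{r}{r-1}\Bigr)^r \mathbb{E}|X_1|^r.
\end{align*}
Letting $N\to\infty$ and invoking monotone convergence gives
\begin{align*}
\mathbb{E}\Bigl[\sup_{n\geq 1}|S_n/n|^r\Bigr] \leq \Bigl(\frac{r}{r-1}\Bigr)^r \mathbb{E}|X_1|^r<\infty,
\end{align*}
which is the desired bound.

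There is no serious technical obstacle once the reverse-martingale structure is recognised, and the resulting estimate is classical. The only point that needs to be checked carefully is the exchangeability identity $\mathbb{E}[X_i\mid S_n, X_{n+1},\ldots]=S_n/n$, which follows because any permutation of $X_1,\ldots,X_n$ leaves the conditioning $\sigma$-algebra $\mathcal{G}_{-n}$ unchanged. A less efficient alternative would be to centre the $X_i$ and appeal to a Marcinkiewicz--Zygmund-type maximal inequality for the centred partial sums, but the reverse-martingale route is cleaner, since the mean is absorbed automatically into the terminal variable $X_1$.
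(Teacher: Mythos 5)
Your proof is correct and is essentially the paper's argument: the paper disposes of this lemma by citing the $L^p$ maximal inequality for martingales (Durrett, 1996), and your reverse-martingale construction with $\mathcal{G}_{-n}=\sigma(S_n,X_{n+1},\ldots)$, followed by time reversal and Doob's inequality applied to the finite martingale with terminal value $X_1$, is precisely the standard way that inequality yields $\mathbb{E}(\sup_k|S_k|/k)^r\leq (r/(r-1))^r\,\mathbb{E}|X_1|^r<\infty$. The supporting details you flag -- the exchangeability identity $\mathbb{E}[X_i\mid\mathcal{G}_{-n}]=S_n/n$ and the monotone-convergence passage $N\to\infty$ -- are all sound.
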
 

\begin{proof} The result follows directly from the $L^p$ maximum inequality for martingales, see eg.~Durrett (1996).
\end{proof}
%\begin{proof}See Section \ref{Proofs}.
%\end{proof}

\medskip\noindent Lemma \ref{MGIneq} is needed to verify the conditions of Lemma \ref{MVLimit}.
Indeed, under the integrability assumptions $\mathbb{E}|L_j|^{\alpha_F+\delta}<\infty$ on $\mathbf{L}$, it follows that $\mathbb{E}|S^L_{n,j}/n|^{\alpha_F+\delta}<\infty$ and that $\mathbb{E}(\sup_k|S^L_{k,j}/k|)^{\alpha_F+\delta}<\infty$. Now, applying Lemma \ref{MVLimit}, we conclude that for fixed $x>0$ 
\begin{align*}
\lim_{n\to\infty}\frac{P(\sum_{j=1}^d S^L_{n,j}F_j>\lambda_nx)}{P(|\mathbf{F}_d|>\lambda_n/n)}=x^{-\alpha_F}\mu\left((\mathbb{E}\mathbf{L})^{-1}(1,\infty)\right).
\end{align*}

\medskip We now consider the tail-behaviour of the sum $S_n$. If $\mathbf{F}_d$ and $\varepsilon_1$ have the same tail indices, we expect $\mathbf{F}_d$ to dominate the extremal behaviour, i.e.~we expect the idiosyncratic components to become less relevant as $n$ grows due to the law of large numbers. Thus, the variation of the sum is mainly due to variation of the factors. If we want to use large deviation probabilities as approximations for finite $n$, we should try to avoid this behaviour. In the following Theorem, which is the main result of the paper, we state the behaviour of the tail probability of our sum under different assumptions. %Here is the main result of the paper.

\begin{theorem}[]\label{LDTheorem}
%\begin{limit}[Temporary, Incomplete]
Let $\mathbf{F}_d=(F_1,\ldots,F_d)$ be a regularly varying random vector, $\mathbf{F}_d\in\textrm{RV}(\alpha_F,\mu)$, and $\varepsilon_i$ be a sequence of i.i.d.~regularly varying random variables, $\varepsilon_i\in\textrm{RV}(\alpha_{\varepsilon})$, with tail balance parameter $p$. 
Consider the factor model given in \eqref{FactorModel} and the sum $S_n$ in Equation \eqref{sumdecomp}.\\
Let $\gamma_n\gg \rho_n$ denote $\lim_{n\to\infty} \gamma_n/\rho_n=\infty$.
\begin{itemize}
\item[(1)] If $\alpha_F\leq\alpha_{\varepsilon}$, then for any $\lambda_n\gg n$,
\begin{align*}
\lim_{n\to\infty}\frac{P(S_n>\lambda_nx)}{P(|\boldsymbol{F}_d|>\lambda_n/n)}=x^{-\alpha_F}\mu\left((E\mathbf{L})^{-1}(1,\infty)\right).\\
\end{align*}

\medskip
\item[(2)] Assume that $P(|\mathbf{F}_d|>x)=L_{|F|}(x)x^{-\alpha_F}$ and $P(|\varepsilon|>x)=L_{|\varepsilon|}(x) x^{-\alpha_{\varepsilon}}$, where $\alpha_F>\alpha_{\varepsilon}>2$. Define $\theta_F=(\alpha_F-1)\slash (\alpha_F-\alpha_{\varepsilon})$, $\theta_{\varepsilon}=\theta_F-1$. %and let $\lambda_n\sim n^{\theta_F}$.
If $\alpha_F>\alpha_{\varepsilon}$, we have three different possibilities:\\
\\
\begin{itemize}
\item[(a)] If $\lambda_n\gg n^{\theta_F}$, then
\begin{align*}
\lim_{n\to\infty}\frac{P(S_n>\lambda_nx)}{nP(|{\varepsilon}|>\lambda_n)}=px^{-\alpha_{\varepsilon}}.
\end{align*}
\item[(b)] If $\lambda_n\ll n^{\theta_F}$, then
\begin{align*}
\lim_{n\to\infty}\frac{P(S_n>\lambda_nx)}{P(|\boldsymbol{F}_d|>\lambda_n/n)}=x^{-\alpha_F}\mu\left((E\mathbf{L})^{-1}(1,\infty)\right).
\end{align*}
\item[(c)] If $\lambda_n\sim n^{\theta_F}$, and
\begin{align}
\lim_{n\to\infty} \frac{L_{|\varepsilon|}(n^{\theta_F})}{L_{|F|}(n^{\theta_{\varepsilon}})}=C\in[0,\infty], \label{ConditionOnL}
\end{align}
then for $0\leq C<\infty$,
\begin{align}
\lim_{n\to\infty}\frac{P(S_n>\lambda_nx)}{P(|\mathbf{F}_d|>\lambda_n/n)}=x^{-\alpha_F}\mu\left((E\mathbf{L})^{-1}(1,\infty)\right)+x^{-\alpha_{\varepsilon}}pC\label{FactorDecomp2}
\end{align}
and for $C=\infty$,
\begin{align*}
\lim_{n\to\infty}\frac{P(S_n>\lambda_nx)}{nP(|{\varepsilon}|>\lambda_n)}=px^{-\alpha_{\varepsilon}}.
\end{align*}
\end{itemize}
\end{itemize}
\end{theorem}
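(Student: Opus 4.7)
The plan is to split the sum into its factor and idiosyncratic pieces, $S_n = A_n + B_n$ with $A_n = \sum_{j=1}^d S_{n,j}^L F_j$ and $B_n = \sum_{i=1}^n \varepsilon_i$ (independent by the model assumptions), obtain sharp tail asymptotics for each piece, compare their rates in each regime, and finally combine the two via the standard ``one-big-jump'' principle $P(A_n + B_n > \lambda_n x) \sim P(A_n > \lambda_n x) + P(B_n > \lambda_n x)$ for sums of independent regularly varying variables.

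For the factor piece I would write $A_n = n\,\mathbf{A}_n \mathbf{F}_d$ with $\mathbf{A}_n = (S_{n,1}^L/n,\dots, S_{n,d}^L/n)$; the strong law gives $\mathbf{A}_n \to \mathbb{E}\mathbf{L}$ a.s., and Lemma \ref{MGIneq} furnishes the integrability hypothesis $\mathbb{E}(\sup_k |\mathbf{A}_k|_\infty)^{\alpha_F+\delta}<\infty$ required in Lemma \ref{MVLimit}. Applied at scale $\lambda_n/n$, the latter yields, whenever $\lambda_n/n \to \infty$,
\begin{align*}
P(A_n > \lambda_n x) \sim P(|\mathbf{F}_d|>\lambda_n/n)\, x^{-\alpha_F}\, \mu\bigl((\mathbb{E}\mathbf{L})^{-1}(1,\infty)\bigr).
\end{align*}
For the idiosyncratic piece, the Nagaev (1970) large-deviation theorem applies since $\alpha_\varepsilon > 2$ and $\sqrt{n \log n}/\lambda_n \to 0$ in each regime considered, and combined with regular variation of $P(|\varepsilon|>\cdot)$ at $\lambda_n$ it yields $P(B_n > \lambda_n x) \sim n p\, x^{-\alpha_\varepsilon}\, P(|\varepsilon|>\lambda_n)$.

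To compare the two rates, I would use $P(|\mathbf{F}_d|>y) = L_{|F|}(y) y^{-\alpha_F}$ and $P(|\varepsilon|>y) = L_{|\varepsilon|}(y)y^{-\alpha_\varepsilon}$ to compute
\begin{align*}
\frac{n P(|\varepsilon|>\lambda_n)}{P(|\mathbf{F}_d|>\lambda_n/n)} = n^{1-\alpha_F}\lambda_n^{\alpha_F - \alpha_\varepsilon}\, \frac{L_{|\varepsilon|}(\lambda_n)}{L_{|F|}(\lambda_n/n)},
\end{align*}
whose pure-power exponent vanishes precisely at $\lambda_n = n^{\theta_F}$ (at which scale $\lambda_n/n = n^{\theta_\varepsilon}$). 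Hence in case (1) (where $\alpha_F \leq \alpha_\varepsilon$ combined with slow variation forces the ratio to $0$) and in (2b) the factor term dominates; in (2a) the idiosyncratic one does; and in (2c) the ratio converges to $C$, producing the additive limit \eqref{FactorDecomp2} with $pC$ as the coefficient of $x^{-\alpha_\varepsilon}$.

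The main obstacle is justifying the asymptotic additivity, most delicately in the borderline case (2c) where the two summands are of the same order. A standard slicing argument gives, for any $\delta \in (0,1/2)$,
\begin{align*}
P(S_n > \lambda_n x) \leq P(A_n > (1-\delta)\lambda_n x) + P(B_n > (1-\delta)\lambda_n x) + P(A_n > \delta \lambda_n x)\,P(B_n > \delta \lambda_n x),
\end{align*}
with a matching asymmetric lower bound; by independence the cross term is a product of two vanishing probabilities, hence negligible compared to either individual summand. Letting $\delta \downarrow 0$ after $n \to \infty$ and using regular variation to absorb the $(1\pm\delta)$ factors would then conclude the argument.
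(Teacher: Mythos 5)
Your proposal is correct and follows essentially the same route as the paper: the same decomposition $S_n=n\mathbf{A}_n\mathbf{F}_d+\sum_i\varepsilon_i$, the same use of Lemmas \ref{MVLimit} and \ref{MGIneq} for the factor part and of Nagaev's large-deviation asymptotic for the idiosyncratic part, the same computation identifying $\lambda_n=n^{\theta_F}$ as the balancing scale, and the same $\delta$-slicing upper/lower bounds with a negligible product cross term (which is precisely the content and proof of the paper's Lemma \ref{LDLemma}). The only cosmetic difference is that you inline that lemma rather than isolating it as an intermediate statement in terms of the constant $Q$.
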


\medskip
\begin{remark}Theorem \ref{LDTheorem} (c) provides a choice for $\lambda_n$ that, given the tail indices of $\mathbf{F}$ and $\varepsilon$,  yields the asymptotic behaviour \eqref{FactorDecomp2}.
Qualitatively, it also shows that for both parts to contribute to the large deviation behaviour, the idiosyncratic part must have heavier tail than the factors. 
\end{remark}

\begin{remark}
Condition \eqref{ConditionOnL} can be difficult to verify. The slowly varying functions of the norms are often not known, and are not easy to calculate explicitly. 
\noindent Examples where Condition \eqref{ConditionOnL} is satisfied include:
\begin{align*}
(a).& \,\, L_{|F|}(x)=c_1, \,\, L_{|\varepsilon|}(x)=c_2\\
(b).& \,\,L_{|F|}(x)\to c_1, \,\, L_{|\varepsilon|}(x)\to c_2\\
(c).& \,\,L_{|F|}(x)=a_1\log x +b_1, \,\, L_{|\varepsilon|}(x)=a_2\log x +b_2.
\end{align*}
\end{remark}

\begin{example}\label{ExampleFactor}
As an illustration of the application of Theorem \ref{LDTheorem}, we consider the case of independent Pareto-distributed factors and idiosyncratic components. Assume that $d=10$, i.e.~$\mathbf{F}_{10}=(F_1,\ldots,F_{10})$. We have $L_{|F|}=L_{|\varepsilon|}=1$ so that $C=1$.
Let $\alpha_F=5$ and $\alpha_{\varepsilon}=3$.
With $\lambda_n=n^{(5-1)/(5-3)}=n^2$ we obtain
\begin{align}
P(\sum_{i=1}^nR_i>\lambda_nx)&=P(\sum_{j=1}^{10} S_{n,j}^LF_j+\sum_{i=1}^n\varepsilon_i>\lambda_n x)\nonumber\\
&\sim P(\sum_{j=1}^{10} S_{n,j}^LF_j>\lambda_n x)+P( \sum_{i=1}^n \varepsilon_i>\lambda_n x)\nonumber\\
&\sim\sum_{j=1}^{10} P(\frac{S_{n,j}^L F_j}{n}>n x)+ n p P(\varepsilon_1>n^2 x)\nonumber\\
&\sim n^{-5}\left(\sum_{j=1}^{10} (EL_j)^{-5} x^{-5}+p x^{-3}\right).\label{ExampleFactorEqn}
\end{align}
%    P(\frac{L_n^1}{n}f_1>n x)
\end{example}

\noindent Before proving Theorem \ref{LDTheorem}, we state a partial result.
\begin{lemma}[]\label{LDLemma}
%\begin{limit}[Temporary, Incomplete]
Assume that $\mathbf{X}$ is a regularly varying $d$-dimensional random vector, $\mathbf{X}\in RV(\mu,\alpha_X)$, and $Y_i$ is a sequence of i.i.d.~regularly varying random variables, $Y_1\in\textrm{RV}(\alpha_Y)$, with tail balance parameter $p$.
Let $\mathbf{A}_n$  be a sequence of $d$-dimensional random vectors satisfying 
$E(\sup_n|\mathbf{A}_n|_{\infty})^{\alpha_X+\delta}<\infty$, for some $\delta>0$, and $\mathbf{A}_n \xrightarrow[]{\textrm{a.s.}} \mathbf{A}\neq \mathbf{0}$. Furthermore assume that $\mathbf{A}_n$, $Y_i$ and $\mathbf{X}$ are independent for all $i$ and $n$.

\medskip \noindent 
Consider the tail probabilities
\begin{align*}
&\overline{F}_{|X|}(x)=P(|\mathbf{X}|>x),\\
&\overline{F}^*(x)= P(n\mathbf{A}_n\mathbf{X}+\sum_{i=1}^n Y_i>x),\\
&\overline{F}_1(x)= P(n\mathbf{A}_n\mathbf{X}>x),\\
&\overline{F}_2(x)= P(\sum_{i=1}^n Y_i>x),
\end{align*}
where $x>0$.
Assume that there exists a sequence $\lambda_n\gg n$ such that %such that $n/\lambda_n \to 0$ and
\begin{align}
\lim_{n\to\infty} \frac{\overline{F}_2(\lambda_n x) }{\overline{F}_{|X|}(\lambda_n/n) }=Qx^{-\alpha_{Y}},\label{Assumption1}
\end{align}
where $Q\in[0,\infty]$.
Then,
\begin{align}
\lim_{n\to\infty} \frac{\overline{F}_1(\lambda_n )}{\overline{F}^*(\lambda_nx)}&=\frac{1}{x^{-\alpha_{X}}+x^{-\alpha_{Y}}Q/\mu_{A^{-1}}}\label{Limit1}
\end{align}
and
\begin{align}
\lim_{n\to\infty} \frac{\overline{F}_2(\lambda_n )}{\overline{F}^*(\lambda_nx)}&=\frac{1}{x^{-\alpha_{Y}}+x^{-\alpha_{X}} \mu_{A^{-1}}/Q}\label{Limit2},
\end{align}
where, $\mu_{A^{-1}}=\mu(\mathbf{A}^{-1}(1,\infty))$. If $Q$ is zero or infinite, we interpret the right hand side of relations \eqref{Limit1}-\eqref{Limit2} as limits.
\end{lemma}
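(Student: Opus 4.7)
The plan is to analyze $\overline{F}^{*}$ by decomposing the sum into the contributions of the two independent summands $U := n\mathbf{A}_n\mathbf{X}$ and $V := \sum_{i=1}^n Y_i$, establish the tail asymptotics of $\overline{F}^{*}(\lambda_n x)$ on the common scale $\overline{F}_{|X|}(\lambda_n/n)$, and then read off the two ratios \eqref{Limit1} and \eqref{Limit2} by division. The first step I would take is to invoke Lemma \ref{MVLimit} for $\overline{F}_1$. Since $\lambda_n\gg n$, the sequence $\lambda_n/n$ tends to infinity, the moment hypothesis of that lemma is exactly the one already assumed here, and it yields
\[
\lim_{n\to\infty} \frac{\overline{F}_1(\lambda_n x)}{\overline{F}_{|X|}(\lambda_n/n)} = x^{-\alpha_X}\mu_{A^{-1}}.
\]
Combined with hypothesis \eqref{Assumption1} for $\overline{F}_2$, both summand tails are then controlled on the same scale.

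The crux of the argument reduces to the ``single big jump'' equivalence
\[
\overline{F}^{*}(\lambda_n x) = \overline{F}_1(\lambda_n x) + \overline{F}_2(\lambda_n x) + o\bigl(\overline{F}_{|X|}(\lambda_n/n)\bigr),
\]
from which, in the case $0<Q<\infty$, the limit
\[
\frac{\overline{F}^{*}(\lambda_n x)}{\overline{F}_{|X|}(\lambda_n/n)} \longrightarrow x^{-\alpha_X}\mu_{A^{-1}} + Qx^{-\alpha_Y}
\]
is immediate, and dividing by the known limits $\overline{F}_1(\lambda_n)/\overline{F}_{|X|}(\lambda_n/n)\to\mu_{A^{-1}}$ and $\overline{F}_2(\lambda_n)/\overline{F}_{|X|}(\lambda_n/n)\to Q$ produces \eqref{Limit1} and \eqref{Limit2} after trivial algebra.

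To prove the equivalence I would use the classical two-sided sandwich. For the upper bound,
\[
\{U+V>t\} \subseteq \{U>(1-\epsilon)t\} \cup \{V>(1-\epsilon)t\} \cup \{U>\epsilon t,\,V>\epsilon t\},
\]
and for the lower bound, via inclusion--exclusion applied to
\[
\{U+V>t\} \supseteq \{U>(1+\epsilon)t,\,V>-\epsilon t\} \cup \{V>(1+\epsilon)t,\,U>-\epsilon t\}.
\]
Independence makes the cross terms factor: the upper-bound remainder is $P(U>\epsilon t)P(V>\epsilon t)$ and the lower-bound intersection is bounded by $P(U>(1+\epsilon)t)P(V>(1+\epsilon)t)$, both of order $\overline{F}_{|X|}(\lambda_n/n)^{2}=o\bigl(\overline{F}_{|X|}(\lambda_n/n)\bigr)$ at $t=\lambda_n x$. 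The factors $P(V>-\epsilon\lambda_n x)$ and $P(U>-\epsilon\lambda_n x)$ tend to one because $V/n\to\mathbb{E}Y_1$ almost surely and $\mathbf{A}_n\mathbf{X}$ is bounded in probability, while $\epsilon\lambda_n x/n\to\infty$. Passing $\epsilon\downarrow 0$ and using continuity of $x\mapsto x^{-\alpha_X}\mu_{A^{-1}}+Qx^{-\alpha_Y}$ in $\epsilon$ closes the sandwich. The boundary cases $Q=0$ and $Q=\infty$ fit into the same framework, with only one summand tail surviving in the limit, yielding precisely the limit-interpretation of the right-hand sides of \eqref{Limit1}--\eqref{Limit2}.

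The main technical obstacle is the upper-bound step, specifically confirming that the diagonal term $P(U>\epsilon\lambda_n x)P(V>\epsilon\lambda_n x)$ is genuinely negligible. This relies on applying Lemma \ref{MVLimit} once more, now at argument $\epsilon\lambda_n x/n\to\infty$, together with \eqref{Assumption1}: each factor is $O\bigl(\overline{F}_{|X|}(\lambda_n/n)\bigr)$, so their product is $o\bigl(\overline{F}_{|X|}(\lambda_n/n)\bigr)$. Once this cancellation is secured, the remaining content is regular-variation bookkeeping and the sending of $\epsilon\downarrow 0$.
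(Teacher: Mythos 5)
Your proposal is correct and follows essentially the same route as the paper: the same upper inclusion $\{U+V>t\}\subset\{U>(1-\delta)t\}\cup\{V>(1-\delta)t\}\cup\{U>\delta t,\,V>\delta t\}$, the analogous two-term lower bound, Lemma \ref{MVLimit} and hypothesis \eqref{Assumption1} to evaluate the two summand tails on the common scale, negligibility of the diagonal cross term by independence, and $\epsilon\downarrow 0$ at the end. The only cosmetic difference is that you normalise by $\overline{F}_{|X|}(\lambda_n/n)$ throughout (so for $Q=\infty$ one must renormalise by $\overline{F}_2(\lambda_n)$, and the claim that $P(V>\epsilon\lambda_n x)=O(\overline{F}_{|X|}(\lambda_n/n))$ then needs the trivial replacement ``the product is negligible because $P(U>\epsilon\lambda_n x)\to 0$''), whereas the paper divides directly by $\overline{F}_1(\lambda_n)$ or $\overline{F}_2(\lambda_n)$, which handles all values of $Q$ uniformly.
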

\begin{proof}
See Section \ref{Proofs}.
\end{proof}

\begin{proof}[Proof of Theorem \ref{LDTheorem}]
We only derive Relation \eqref{FactorDecomp2}, the other relations are  proved in a similar fashion. First, we compute $Q$ in (\ref{Assumption1}). This gives us the sequence $\lambda_n$ via the tail indices. 
We then apply Lemma \ref{LDLemma} to obtain the results.

We have, with $\overline{F}_2(\lambda_n x)=P(\sum_{i=1}^n \varepsilon_i>\lambda_n x)$,
\begin{align*}
&\lim_{n\to\infty} \frac{\overline{F}_2(\lambda_n x) }{\overline{F}_{|F|}(\lambda_n/n) }\\
&\quad=\lim_{n\to\infty}\frac{\overline{F}_2(\lambda_n x) }{n\overline{F}_{|\varepsilon|}(\lambda_n)}\frac{n\overline{F}_{|\varepsilon|}(\lambda_n)}{\overline{F}_{|F|}(\lambda_n/n) }\\
&\quad=\lim_{n\to\infty}\underbrace{\frac{\overline{F}_2(\lambda_n x) }{n\overline{F}_{|\varepsilon|}(\lambda_n)}}_{I_1}\underbrace{\frac{L_{|\varepsilon|}(n^{\theta_F})}{L_{|F|}(n^{\theta_{\varepsilon}})}}_{I_2}\underbrace{\frac{n\lambda_n^{-\alpha_{\varepsilon}}}{(\lambda_n/n)^{-\alpha_{F}}}}_{I_3}.
\end{align*}

\noindent From \eqref{IntroLD} we get $I_1\to px^{-\alpha_{\varepsilon}}$ and, by assumption, $I_2\to C$. For simplicity, we restrict ourselves to the case $I_3=1$. This condition gives the expression for $\lambda_n$.
We then have $Q=pC$.
Applying Lemma \ref{LDLemma} we obtain, with $\mu_{L^{-1}}=\mu\big(\mathbf{L}^{-1}(1,\infty)\big)$,
\begin{align*}
&\lim_{n\to\infty}\frac{\overline{F}^*(\lambda_nx)}{\overline{F}_{|F|}(\lambda_n/n)}\\
&\quad=\lim_{n\to\infty}\frac{\overline{F}^*(\lambda_nx)}{\overline{F}_1(\lambda_n)}\frac{\overline{F}_1(\lambda_n)}{\overline{F}_{|F|}(\lambda_n/n)}\\
&\quad=(x^{-\alpha_{F}}+Qx^{-\alpha_{\varepsilon}}/\mu_{L^{-1}})\mu_{L^{-1}}=\mu_{L^{-1}} x^{-\alpha_{F}}+px^{-\alpha_{\varepsilon}}C,
\end{align*}
and we arrive at relation \eqref{FactorDecomp2}.

\end{proof}

The above results rely on the regular variation of the components involved. In the case of light-tailed random variables, the decomposition in Theorem \ref{LDTheorem} is no longer valid. We illustrate this in the following corollary by assuming light-tailed factors.

\begin{corollary}[]\label{LDLemmaLight}

Let $X>0$ be a light-tailed random variable with tail distribution $\overline{F}_X(x)\sim e^{-g(x)}$, where $g(x)-cx\to\infty$, as $x\to\infty$ for some $c>0$. Let $Y_i, i=1,2,\ldots$ be a sequence of i.i.d.~regularly varying random variables with tail-index $\alpha>0$, $Y_i\in RV(\alpha)$.
Then, for any sequence $\lambda_n$ such that $\lambda_n/n\to\infty$,
\begin{align*}
\lim_{n\to\infty} \frac{P(nX+\sum_{i=1}^n Y_i>\lambda_n)}{P(\sum_{i=1}^n Y_i>\lambda_n)}=1.
\end{align*}
\end{corollary}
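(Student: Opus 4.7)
The plan is to sandwich the ratio between $1$ and $1+o(1)$. The lower bound is immediate: since $X>0$ almost surely, the event $\{\sum_{i=1}^n Y_i>\lambda_n\}$ is contained in $\{nX+\sum_{i=1}^n Y_i>\lambda_n\}$, so the ratio is bounded below by $1$.

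For the matching upper bound, I would condition on $X$ (which is independent of the $Y_i$) and split the resulting integral at a cutoff scaling with $\lambda_n$. Concretely, for a fixed $\delta\in(0,1)$,
\begin{align*}
P(nX+\sum_{i=1}^n Y_i>\lambda_n)
&=\int_0^\infty P(\sum_{i=1}^n Y_i>\lambda_n-nx)\,dF_X(x)\\
&\leq P(\sum_{i=1}^n Y_i>(1-\delta)\lambda_n)+P(X>\delta\lambda_n/n),
\end{align*}
using monotonicity of the tail on $\{x\leq\delta\lambda_n/n\}$ (on which $\lambda_n-nx\geq(1-\delta)\lambda_n$) and the trivial bound $P(\cdot)\leq 1$ beyond the cutoff.

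After dividing by $P(\sum_{i=1}^n Y_i>\lambda_n)$, the first summand tends to $(1-\delta)^{-\alpha}$: by Relation \eqref{IntroLD}, $P(\sum_{i=1}^n Y_i>\lambda)\sim np\,P(|Y_1|>\lambda)$ in the large-deviation regime, and $P(|Y_1|>\cdot)$ is regularly varying with index $-\alpha$, so a Potter-type argument delivers the ratio; letting $\delta\downarrow 0$ at the end drives this contribution to $1$. For the second summand, the light-tail hypothesis $g(x)-cx\to\infty$ gives $P(X>y)\leq e^{-cy-h(y)}$ for some $h(y)\to\infty$, so $P(nX>\delta\lambda_n)$ decays faster than $e^{-c\delta\lambda_n/n}$, while $P(\sum_{i=1}^n Y_i>\lambda_n)\sim nL(\lambda_n)\lambda_n^{-\alpha}$ is only polynomially small; hence the second contribution vanishes in the limit.

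The hard part is the quantitative comparison in the last step: the exponential decay of $P(nX>\delta\lambda_n)$ must outpace the polynomial decay of $P(\sum_{i=1}^n Y_i>\lambda_n)$, which requires $c\lambda_n/n$ to dominate $\alpha\log\lambda_n-\log n$. This is automatic once $\lambda_n$ lies in Nagaev's large-deviation regime (e.g.~$\sqrt{n\log n}/\lambda_n\to 0$ when $\alpha>2$), where Relation \eqref{IntroLD} genuinely applies and also legitimises the regular-variation argument for the first summand. Thus under this natural strengthening of $\lambda_n/n\to\infty$ the two bounds close and the corollary follows.
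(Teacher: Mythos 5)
Your sandwich structure is sound and is in fact more elementary than the paper's route (the paper computes the constant $Q$ in \eqref{Assumption1} and reads the conclusion off \eqref{Limit2} of Lemma \ref{LDLemma}); the lower bound, the split at $X\le\delta\lambda_n/n$, and the treatment of the first summand via \eqref{IntroLD} and regular variation are all correct. The gap is in your last step. You need $P(X>\delta\lambda_n/n)=o\bigl(P(\sum_{i=1}^nY_i>\lambda_n)\bigr)$, i.e.\ $c\delta\lambda_n/n-\bigl(\alpha\log\lambda_n-\log n\bigr)\to\infty$, and you claim this is ``automatic once $\lambda_n$ lies in Nagaev's large-deviation regime.'' It is not: for $\alpha>2$ the condition $\sqrt{n\log n}/\lambda_n\to0$ is already \emph{implied} by $\lambda_n/n\to\infty$ (since $\sqrt{n\log n}\ll n$), so your ``natural strengthening'' is vacuous, and the comparison genuinely fails when $\lambda_n/n$ grows slowly. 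Take $\lambda_n=n\log\log n$, $X$ standard exponential (so $g(x)=x$ and $g(x)-x/2\to\infty$), and $\alpha>2$: then $P(X>\delta\lambda_n/n)=(\log n)^{-\delta}$, whereas $nP(|Y_1|>\lambda_n)$ is of order $n^{1-\alpha}$ up to slowly varying factors, so your second summand dominates instead of vanishing.

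This is not merely a defect of the method. On the event $\{X>\lambda_n/n+K\}\cap\{\sum_{i=1}^nY_i>-Kn\}$, which for $K>|EY_1|$ has probability $\sim e^{-K}P(X>\lambda_n/n)$ by independence and the law of large numbers, one has $nX+\sum_{i=1}^nY_i>\lambda_n$; hence in the example above the ratio in the corollary tends to infinity, and the statement itself fails under the stated hypotheses. The missing ingredient is exactly the condition $e^{-g(\lambda_n/n)}=o\bigl(nP(|Y_1|>\lambda_n)\bigr)$, equivalently $g(\lambda_n/n)+\log n-\alpha\log\lambda_n\to\infty$ --- which is the assertion ``$\log Q=\infty$'' that the paper's own proof also states without justification from $\lambda_n/n\to\infty$ alone. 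Under that additional hypothesis your argument closes (as does the paper's); without it, no argument can.
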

\begin{proof}
Considering Equation \eqref{Assumption1}, we have
\begin{align*}
\frac{\overline{F}_2(\lambda_n ) }{\overline{F}_{|X|}(\lambda_n/n) }=\frac{P(\sum_{i=1}^n Y_i>\lambda_n)}{P(X>\lambda_n/n)}\sim\frac{e^{-g(\lambda_n/n)}}{n\lambda_n^{-\alpha}},
\end{align*}
so that
\begin{align*}
\log{Q}=\lim_{n\to\infty} g(\lambda_n/n)+\log{n} -\alpha \log{ \lambda_n}=\infty.
\end{align*}
Hence, using Equation \eqref{Limit2} we obtain the result. 
\end{proof}

%%%%%%%%%%%%%%%%%%%%%%%%%%%%%%End of Part 1%%%%%%%%%%%%%%%%%%%%%%%%%%%%%%%%%%%%%%%

%%%%%%%%%%%%%%%%%%%%%%%%%%%%%%%%%Simulation Part%%%%%%%%%%%%%%%%%%%%%%%%%%%%%%%%%%%%%%%

\section{Simulation} 
To see how the approximations derived in the previous section behave, we will present a short simulation study.
Since tail probabilities are rare events, naive Monte Carlo Simulation can be very slow.
To achieve a given relative error, a huge number of simulations are often needed. Methods of variance reduction are therefore crucial for obtaining a satisfactory estimation. 
%Using variance reduction, one aims to obtain a relative error that is bounded as the desired probability tends to zero.  
We present a method for estimating the tail probability of a sum of variables from our factor model, under certain restrictive conditions.

\medskip Variance reduction algorithms for sums of heavy-tailed random variables are often based on the observation that, asymptotically, a sum is determined by its largest term. This is then used for conditioning or change of measure, \textit{importance sampling}. Examples of such algorithms include Juneja et al.~(2002), where measures for importance sampling are chosen by the so-called hazard rate twisting method. Dupuis et al.~(2006) use a dynamic algorithm to change measure for each term in the sum, making sure that the rare event in question occurs. 
In the setting of a portfolio loss depending on multivariate $t$-distributed risk factors, Glasserman et al.~(2002) derive an importance sampling algorithm using a quadratic approximation of the portfolio loss.

\medskip 
Using the conditioning approach suggested in Asmussen and Kroese (2006) we can state a simulation algorithm for our factor model with i.i.d.~factors, i.i.d.~loadings and i.i.d.~idiosyncratic components. 

\medskip\noindent Denoting $M_{n,d}=\max(\varepsilon_1,\ldots,\varepsilon_n,S_{n,1}^L F_1,\ldots,S_{n,d}^L F_d)$ and assuming that the all variables are continuous, we have
\begin{align*}
P(S_n>x)&=P(\sum_{j=1}^d S_{n,j}^L F_j+\sum_{i=1}^n \varepsilon_i>x)=P(S^F_d+S^{\varepsilon}_n>x)\\
&=nP(S_n>x, M_{n,d}=\varepsilon_n)+dP(S_n>x, M_{n,d}=S_{n,d}^L F_d).
\end{align*}

Conditioning yields
\begin{align*}
&P(S_n>x, M_{n,d}=\varepsilon_n)\\
&\quad=EP(S_n>x, M_{n,d}=\varepsilon_n|\varepsilon_1,\ldots,\varepsilon_{n-1}, S_{n,1}^L F_1,\ldots,S_{n,d}^L F_d)\\
&\quad=EP(\varepsilon>(x-S_{n-1})\vee M_{n-1,d}|\varepsilon_1,\ldots,\varepsilon_{n-1}, S_{n,1}^L F_1,\ldots,S_{n,d}^L F_d).\\
&\textrm{Similarly,}\\
&P(S_n>x, M_{n,d}=S_{n,d}^L D_d)\\
&\quad= EP(S_n>x, M_{n,d}=S_{n,d}^L F_d|\varepsilon_1,\ldots,\varepsilon_{n}, S_{n,1}^L F_1,\ldots,S_{n,d-1}^L F_{d-1})\\
&\quad= EP(S_{n,d}^L F_d>(x-S_{n-1})\vee M_{n,d-1}|\varepsilon_1,\ldots,\varepsilon_{n}, S_{n,1}^L F_1,\ldots,S_{n,d-1}^L F_{d-1}).
\end{align*}
If the distributions of $\varepsilon$ and $S_{n,d}^L F_d$ are known, these probabilities can be calculated explicitly.
Alternatively, conditioning on $\boldsymbol{\Lambda}_n$ and calculating the last probability by simulation only requires knowledge of the marginal distribution of $F_d$.

\begin{table}[h!t]% title of Table 
\centering % used for centering table 
\begin{tabular}{c c c c c c c c} % centered columns (4 columns) 
\hline\\[-2ex] %inserts  horizontal lines Case & Method
 $x$ & & $n$ &  $10^3$ & $10^4$  & $10^5$   & & \\ [0.5ex] % inserts table %heading 
\hline\\[-1ex] % inserts single horizontal line 
0.1& &   & 1.0010e-09  & 1.0010e-14&   1.0010e-19& & LD-Estimate\\ % inserting body of the table 
& &   &1.9878e-09  &  1.0673e-14& 1.0074e-19 & &Conditional MC\ \\[1ex]
1& &   &   1.1000e-14&  1.1000e-19 & 1.1000e-24 & & \\
& &   & 1.1708e-14 & 1.1068e-19  &  1.1007e-24 & & \\[1ex]
10& &  & 1.1000e-18& 1.1000e-23   &  1.1000e-28
 &  & \\ 
& &  &  1.1049e-18 & 1.1005e-23   & 1.1000e-28 &  & \\ [1ex]
\hline\\[-1ex] % inserts single horizontal line 

\end{tabular} \caption{Estimates of ${P}(S_n>\lambda_n x)$ using conditional Monte Carlo for the model in Example \ref{ExampleFactor} with $\lambda_n=n^2$. The number of factors is $d=10$ and $L_{ij}=1$.
  10000 iterations are used for all estimates. The LD-estimate uses Equation \eqref{ExampleFactorEqn} from Example \ref{ExampleFactor}.}  \label{Table1} % is used to refer this table in the text

\end{table}
\medskip
In Table \ref{Table1}, we compare the analytical approximation of the tail probability in Example 1 to simulations using the above algorithm. Since it is a large deviation result, the approximation performs best when we consider regions far out in the tail, i.e.~when $\lambda_n x=n^2 x$ is large. The resulting probabilities in these regions range from small to extremely small.  As expected,  we obtain the worst results for $x=0.1$ and $n=10^3$.

%***
%\begin{table}[h!t]% title of Table 
%\centering % used for centering table 
%\begin{tabular}{c c c c c c c c} % centered columns (4 columns) 
%\hline\\[-2ex] %inserts  horizontal lines Case & Method
% $x$ & & $n$ &  $10^3$ & $10^4$  & $10^5$   & & \\ [0.5ex] % inserts table %heading 
%\hline\\[-1ex] % inserts single horizontal line 
%0.1& &   & 0.5036  & 0.9379&   0.9936& & LD-Estimate \slash \\ % inserting body of the table 
%& &   &  &  &  & &Conditional MC\ \\[1ex]
%1& &   &  0.9395&  0.9939 & 0.9994 & & \\
%& &   &  &   &   & & \\[1ex]
%10& &  & 0.9956& 0.9995   &  0.9999?
% &  & \\ 
%& &  &  &   &  &  & \\ [1ex]
%\hline\\[-1ex] % inserts single horizontal line 

%\end{tabular} \caption{Estimates of $\mathbb{P}(S_n>\lambda_n x)$ using conditional Monte Carlo for the model in Example \ref{ExampleFactor} with $\lambda_n=n^2$. The number of factors is $d=10$ and $L_{ij}=1$.
%  10000 iterations are used for all estimates. The LD-estimate uses equation \eqref{ExampleFactorEqn} from Example \ref{ExampleFactor}.}  \label{Table1} % is used to refer this table in the text

%\end{table}

%%%%%%%%%%%%%%%%%%%%%%%End of simulation part%%%%%%%%%%%%%%%%%%%%%%%%%

%%%%%%%%%%%%%%%%%%%%%%%%%%%%LDP of Processes %%%%%%%%%%%%%%%%%%%%%%%%%%%%%%%%%%%%%%%%%%%%%%

\section{Large Deviations for Factor Processes}
In this section, we study the large deviation behaviour of sums of heavy-tailed processes with factor structure.
We assume that, in Equation \eqref{IntroSum}, $\mathbf{F}_d=\{\mathbf{F}_d(t): t\in[0,1]\}$ and $\varepsilon_i=\{\varepsilon_i(t): t\in[0,1]\}$ are L\'evy processes, whose increments are regularly varying, or equivalently, whose L\'evy measures are regularly varying. 

\medskip\noindent In Theorem \ref{Prop} we establish a large deviation result for the process
\begin{align*}
S_n(t)=\sum_{j=1}^d S_{n,j}^L F_j(t)+\sum_{i=1}^n \varepsilon_i(t).
\end{align*}

\begin{theorem}\label{Prop}
Assume that $\mathbf{F}_d(t)$ is a $d$-dimensional L\'evy process and that $\varepsilon_i(t), \,i=1,\ldots,n$ are i.i.d.~L\'evy processes. Furthermore, assume that their L\'evy measures are regularly varying with tail indices satisfying $\alpha_F>\alpha_{\varepsilon}>2$. 

\noindent Let
\begin{align*}
P(|\mathbf{F}_d(1)|>x)&=L_{|F|}(x)x^{-\alpha_F},\\
P(|\varepsilon(1)|>x)&=L_{|\varepsilon|}(x)x^{-\alpha_{\varepsilon}},
\end{align*}
and assume that $L_{|F|}(x)$ and $L_{|\varepsilon|}(x)$ satisfy condition \eqref{ConditionOnL} in Theorem 1.  

\noindent Then,
\begin{align}\label{LDPprocess}
\lim_{n\to\infty} \gamma_n P(\lambda_n^{-1}S_n\in B)=\tilde{m}(B),%=m_{F}(\cdot)+m_{\varepsilon}(\cdot).
\end{align} 
for all Borel sets $B\in D([0,1],\mathbb{R})$  with $0\notin \overline{B}$ and $\tilde{m}(\partial B)=0$. We denote this property by $S_n\in \textrm{LD}((\gamma_n, \lambda_n),\tilde{m}, D([0,1],\mathbb{R}))$.
 
\medskip\noindent Moreover, $\tilde{m}$ puts all mass on step functions with one step, i.e.
\begin{align*}
\tilde{m}(\mathcal{V}_0^c)&=0,
%m_{F}(\mathcal{V}_0^c)&=0\\
%m_{\varepsilon}(\mathcal{V}_0^c)&=0,
\end{align*}
where $\mathcal{V}_0=\{{x}\in D([0,1],\mathbb{R}): {x}={y}1_{[v,1]}, v\in[0,1], {y}\in\mathbb{R}\}$. That is, any extreme event during the interval is due to one large jump of either the factor or the idiosyncratic part of the process.
\end{theorem}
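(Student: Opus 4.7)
The plan is to combine the large deviation principle for regularly varying L\'evy processes established in Hult and Lindskog (2005) with the decomposition and law of large numbers substitution already exploited in the proof of Theorem \ref{LDTheorem}. The one-jump structure of the limit measure is precisely the distinctive feature of the Hult-Lindskog LDP, so the main job is to inherit it from the two pieces of $S_n$.

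First I would write $S_n = X_n^F + X_n^\varepsilon$ with $X_n^F(t) = \sum_{j=1}^d S^L_{n,j}F_j(t)$ and $X_n^\varepsilon(t)=\sum_{i=1}^n\varepsilon_i(t)$. Since $X_n^\varepsilon$ is a L\'evy process with L\'evy measure equal to $n$ times that of $\varepsilon_1$, it is regularly varying with index $\alpha_\varepsilon$, and Hult-Lindskog gives an LDP $X_n^\varepsilon \in \textrm{LD}((\gamma_n^\varepsilon,\lambda_n),m_\varepsilon,D([0,1],\mathbb{R}))$ with $m_\varepsilon$ concentrated on $\mathcal{V}_0$ (scaled by the tail balance $p$). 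For $X_n^F$, I would use the almost sure convergence $S^L_{n,j}/n\to EL_j$ from the strong law together with the moment bound of Lemma \ref{MGIneq} to replace the random coefficients by their limits; the limit process $n\sum_{j=1}^d(EL_j)F_j(t)$ is again a L\'evy process, whose one-dimensional L\'evy measure is the pushforward of $\mu$ under $\mathbf{x}\mapsto(E\mathbf{L})\cdot\mathbf{x}$ and hence regularly varying with index $\alpha_F$ and tail constant $\mu((E\mathbf{L})^{-1}(1,\infty))$. Conditioning on $\boldsymbol{\Lambda}_n$, applying Hult-Lindskog to the deterministic-coefficient process, and then using dominated convergence exactly as in Lemma \ref{MVLimit} yields $X_n^F\in\textrm{LD}((\gamma_n^F,\lambda_n),m_F,D([0,1],\mathbb{R}))$ with $m_F$ also supported on $\mathcal{V}_0$.

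Next, with $\lambda_n\sim n^{\theta_F}$ from Theorem \ref{LDTheorem}(c), the natural normalising sequences $\gamma_n^F=1/P(|\mathbf{F}_d|>\lambda_n/n)$ and $\gamma_n^\varepsilon=1/(nP(|\varepsilon|>\lambda_n))$ are of the same order, with ratio tending to $C$ by condition \eqref{ConditionOnL}. Using the independence of $X_n^F$ and $X_n^\varepsilon$, the joint LDP on $D([0,1],\mathbb{R})\times D([0,1],\mathbb{R})$ has limit measure supported on $(\mathcal{V}_0\times\{0\})\cup(\{0\}\times\mathcal{V}_0)$, because simultaneous large jumps from both components are of lower asymptotic order. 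Pushing forward by the addition map and choosing a common normalisation $\gamma_n$ compatible with both rates, this produces \eqref{LDPprocess} with $\tilde{m}=m_F+C\,m_\varepsilon$ (up to the common factor), and the support statement $\tilde{m}(\mathcal{V}_0^c)=0$ is immediate since both summands charge only one-jump step functions. The one-dimensional marginals at $t=1$ reduce exactly to Theorem \ref{LDTheorem}(c), which is the sanity check.

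The main obstacle will be the functional analogue of Lemma \ref{MVLimit}: rigorously transferring the law-of-large-numbers substitution from the one-dimensional setting to the Skorokhod space $D([0,1],\mathbb{R})$ while preserving the vague convergence $\gamma_n P(\lambda_n^{-1}X_n^F\in\cdot)\to m_F$ on sets bounded away from $0$. This should be handled by conditioning on $\boldsymbol{\Lambda}_n$, using that the scalars $S^L_{n,j}/n$ act linearly and continuously on paths so that sets of the form $\{\lambda_n^{-1}X_n^F\in B\}$ become scaled Hult-Lindskog events with coefficients converging a.s., and then passing to the limit via dominated convergence with the uniform moment control $E(\sup_k|S^L_{k,j}/k|)^{\alpha_F+\delta}<\infty$ supplied by Lemma \ref{MGIneq}. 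Once this step is in place, the additivity of the two LDPs and the one-jump support are straightforward consequences.
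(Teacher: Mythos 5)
Your strategy is sound but takes a genuinely different route from the paper. You decompose $S_n$ into the factor part and the idiosyncratic part, establish a separate functional large deviation limit for each on $D([0,1],\mathbb{R})$, and then recombine them through independence and the addition map. The paper instead never splits the process at the path level: it applies Theorem \ref{LDTheorem} (which already merges the two contributions, random loadings included) to the one-dimensional marginals $S_n(t)$, obtaining $\gamma_n P(\lambda_n^{-1}S_n(t)>x)\to t\tilde{\mu}(x,\infty)$ with $\gamma_n^{-1}=P(|\mathbf{F}_d(1)|>\lambda_n/n)$, then verifies the hypotheses of Theorem 13 of Hult and Lindskog (2005) (the increment condition $\tilde m_\delta-\tilde m_0=\delta\tilde\mu$ and the uniform smallness $\alpha^n_{\lambda_n,1}(1)=P(|S_n(1)|>\lambda_n)\to 0$) to lift the marginal convergence to \eqref{LDPprocess} in one step; the one-jump support then comes from Lemma 21 of Hult--Lindskog via the oscillation sets $B(2,\epsilon,[0,1])$ and the open-set lower bound, applied to the whole sum rather than inherited componentwise. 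What the paper's route buys is precisely the avoidance of the two technical points you flag or gloss over: (i) the functional analogue of Lemma \ref{MVLimit} with random coefficients $S^L_{n,j}/n$, which you correctly identify as the main obstacle but do not carry out (the conditioning-plus-dominated-convergence argument must now control events in Skorokhod space bounded away from $0$, not just half-lines); and (ii) the recombination step, where addition is \emph{not} continuous on $D([0,1],\mathbb{R})\times D([0,1],\mathbb{R})$ in the $J_1$ product topology — you need the standard extra argument that the joint limit measure charges only pairs with at most one nonzero (hence no common discontinuities), so the addition map is a.s.\ continuous under the limit. Both gaps are fillable, and your normalisation bookkeeping ($\gamma_n^F/\gamma_n^\varepsilon\to C$ under \eqref{ConditionOnL}, giving $\tilde m=m_F+Cm_\varepsilon$ consistent with \eqref{FactorDecomp2}) is correct, but as written your proof is a programme whose hardest steps remain to be executed, whereas the paper's marginal-plus-lifting argument dispatches them by citing the Hult--Lindskog machinery once for the aggregate process.
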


\medskip\noindent
The proof of Theorem \ref{Prop} is given in Section 5, below. We end this section with an example.
\begin{example}
Let $\mathbf{F}_d(t)$ and $\varepsilon(t)$ be compound Poisson processes
\begin{align*}
\mathbf{F}_d(t)&=\sum_{i=1}^{N^F_t}\mathbf{Z}_i\\
\varepsilon_i(t)&=\sum_{i=j}^{N^{\varepsilon}_t} {W}_{ij},
\end{align*}
where $\mathbf{Z}_i=(Z_i^1,\ldots,Z_i^d)$ are random vectors with i.i.d.~components such that $P(|Z_1^1|>x)=x^{-\alpha_F}$ with tail balance parameter $p_F$ and $W_{ij}$ are i.i.d.~random variables such that $P(|W_{11}|>x)=x^{-\alpha_{\varepsilon}}$ with tail balance parameter $p_{\varepsilon}$. $N^F_t$ and $N^{\varepsilon}_t$ are Poisson processes with intensities $\lambda_F$ and $\lambda_{\varepsilon}$, respectively. Assume that the tail-indices satisfy $\alpha_F>\alpha_{\varepsilon}>2$. Both  $\mathbf{F}_d(1)$ and $\varepsilon_i(1)$ are regularly varying, and with $|\cdot|=|\cdot|_1$, we have
\begin{align*}
P(|\mathbf{F}_d(1)|>x)\sim d \lambda_F P(|Z_{11}|>x) \textrm{ and }
 P(|\varepsilon_1(1)|>x)\sim  \lambda_{\varepsilon} P(|W_{11}|>x).
\end{align*}
The conditions of Theorem \ref{Prop} being satisfied, we get $\gamma_nP(S_n\in \lambda_n B)\to \tilde{m}(B)$, where $\tilde{m}$ puts all its mass on step functions with one step. Moreover,
\begin{align*}
m_t(x,\infty):=\lim_{n\to\infty} \gamma_n P(\lambda_n^{-1}S_n(t)\in (x,\infty))
\end{align*} is explicitly given by (see (\ref{FactorDecomp2}), above)
\begin{align*}
m_t(x,\infty)=tp_F\sum_{j=1}^d (EL_j)^{-\alpha_F}x^{-\alpha_F} +tp_{\varepsilon}\frac{\lambda_{\varepsilon}}{d\lambda_F}x^{-\alpha_{\varepsilon}}.
\end{align*}
\end{example}

%%%%%%%%%%%%%%%%%%%%%%%%%%%End of LDP for Processes%%%%%%%%%%%%%%%%%%%%%%%%%%%%%%%%%%%

%%%%%%%%%%%%%%%%%%%%%%Proof of Technical Lemmas%%%%%%%%%%%%%%%%%%%%%%%%%%%%%%%%%%%%%%%%%

\section{Proofs and Technical Results}\label{Proofs}

To prove Lemma \ref{MVLimit}, we use the following multivariate version of Breiman's Lemma proved by Basrak, Davis and Mikosch (2002).
\begin{lemma}[Breiman's lemma]
Let $\mathbf{X}$ be a $d\times 1$ regularly varying random vector and let $\mathbf{A}$ be a $k\times d$ random matrix, independent of $\mathbf{X}$. If $0<E|\mathbf{A}|_{\infty}^{\alpha+\delta}<\infty$ for some $\delta>0$, then
\begin{align*}
\lim_{n\to\infty}\frac{P(\mathbf{A}\mathbf{X}\in a_n B)}{P(|\mathbf{X}|>a_n)}=E (\mu \circ \mathbf{A}^{-1} (B)).
\end{align*}
for any Borel set $B\subset \mathbb{R}^d$ satisfying $\mathbf{0}\notin \overline{B}$ and $\mu(\partial B)=0$.%\in\mathcal{B}(\overline{\mathbb{R}}^d\backslash\{\mathbf{0}\})$ 
\end{lemma}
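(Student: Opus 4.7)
The approach is to condition on $\mathbf{A}$, reducing to the deterministic-matrix version of regular variation pointwise, and then pass the limit inside the expectation by dominated convergence. The moment hypothesis $E|\mathbf{A}|_\infty^{\alpha+\delta}<\infty$ supplies the dominating function via Potter's bounds.

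For a fixed realization $A$, the map $x\mapsto Ax$ is continuous and sends $0$ to $0$, so by the continuous-mapping principle for multivariate regular variation (Hult and Lindskog (2006)) the vector $A\mathbf{X}$ is regularly varying in $\mathbb{R}^k$ with limit measure $\mu\circ A^{-1}$, provided $A^{-1}B$ is a continuity set of $\mu$. Since $\partial(A^{-1}B)\subseteq A^{-1}(\partial B)$ and $\mu(\partial B)=0$, this holds for $\mathbf{A}$-almost every $A$ (a possible null set of degenerate realizations causes no harm and is absorbed in the next step). Hence for a.e.\ $A$,
\begin{equation*}
\frac{P(A\mathbf{X}\in a_n B\mid\mathbf{A}=A)}{P(|\mathbf{X}|>a_n)}\longrightarrow\mu(A^{-1}B)\quad\text{as }n\to\infty.
\end{equation*}

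To justify the interchange of limit and expectation, choose $\epsilon>0$ with $B\subset\{|y|\geq\epsilon\}$, which is possible since $\mathbf{0}\notin\overline{B}$. Then
\begin{equation*}
P(A\mathbf{X}\in a_n B\mid\mathbf{A}=A)\leq P(|\mathbf{X}|\geq\epsilon a_n/|A|_\infty),
\end{equation*}
and Potter's inequality applied to the tail $P(|\mathbf{X}|>\cdot)$, which is regularly varying of index $-\alpha$, yields
\begin{equation*}
\frac{P(|\mathbf{X}|\geq\epsilon a_n/|A|_\infty)}{P(|\mathbf{X}|>a_n)}\leq C_\eta\bigl((|A|_\infty/\epsilon)^{\alpha+\eta}+(|A|_\infty/\epsilon)^{\alpha-\eta}\bigr)
\end{equation*}
for all sufficiently large $n$ and any $0<\eta\leq\delta$. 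The right-hand side is integrable against the law of $\mathbf{A}$ by the moment assumption, so dominated convergence applied to $E[P(A\mathbf{X}\in a_n B\mid\mathbf{A})/P(|\mathbf{X}|>a_n)]$ gives the target limit $E[\mu(\mathbf{A}^{-1}B)]$.

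The main obstacle is producing this uniform-in-$n$ tail estimate; Potter's bound is precisely what forces the exponent $\alpha+\delta$ (rather than $\alpha$) in the moment hypothesis, and makes that hypothesis essentially sharp. A secondary technicality is verifying the continuity-set requirement $\mu(A^{-1}(\partial B))=0$ for $\mathbf{A}$-a.e.\ $A$, which is routine but worth checking when $\mathbf{A}$ can be singular; any remaining bad null set is invisible after taking expectation.
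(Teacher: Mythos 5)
The paper does not prove this result; it is cited directly from Basrak, Davis and Mikosch (2002). Your conditioning-plus-Potter outline is the same route used there and in the classical univariate Breiman argument, so you have reconstructed the right strategy. There is, however, a genuine gap in the domination step. Potter's inequality for the regularly varying tail $\overline{F}(x)=P(|\mathbf{X}|>x)$ holds only once \emph{both} arguments exceed some fixed threshold $x_0(\eta)$: one has $\overline{F}(y)/\overline{F}(x)\le C_\eta\max\{(x/y)^{\alpha+\eta},(x/y)^{\alpha-\eta}\}$ for $x,y\ge x_0$. You apply it with $y=\epsilon a_n/|\mathbf{A}|_\infty$, and this lies below $x_0$ exactly on the event $\{|\mathbf{A}|_\infty>\epsilon a_n/x_0\}$, which has positive probability for every $n$ unless $|\mathbf{A}|_\infty$ is essentially bounded. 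So the proposed majorant is not actually a majorant of the integrand on the whole space for any single $n$, and dominated convergence as written does not yet apply; the phrase ``for all sufficiently large $n$'' cannot be made uniform over the unbounded part of $\mathbf{A}$.

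The standard repair is a truncation at a level $M$: on $\{|\mathbf{A}|_\infty\le M\}$ Potter gives, for $n$ large (depending only on $M$), an integrable majorant of the form $C_\eta(|\mathbf{A}|_\infty/\epsilon)^{\alpha+\eta}$, and dominated convergence passes the limit through on that piece. On $\{|\mathbf{A}|_\infty>M\}$, split again: where Potter still applies, the contribution is at most $C_\eta\epsilon^{-(\alpha+\eta)}E\bigl[|\mathbf{A}|_\infty^{\alpha+\eta}\mathbf{1}_{\{|\mathbf{A}|_\infty>M\}}\bigr]$, which tends to $0$ as $M\to\infty$ (take $\eta<\delta$); on $\{|\mathbf{A}|_\infty>\epsilon a_n/x_0\}$ bound the ratio crudely by $1/\overline{F}(a_n)$, apply Markov to get $P(|\mathbf{A}|_\infty>\epsilon a_n/x_0)\le E|\mathbf{A}|_\infty^{\alpha+\delta}\,(x_0/(\epsilon a_n))^{\alpha+\delta}$, and use $a_n^{-(\alpha+\delta)}/\overline{F}(a_n)\to 0$ because $\overline{F}$ has index $-\alpha$. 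Letting $M\to\infty$ then identifies the limit as $E\bigl[\mu(\mathbf{A}^{-1}B)\bigr]$; notice that the moment exponent $\alpha+\delta$ is doing real work in this Markov step, not only inside Potter. Your side remark on the continuity-set requirement is also correct and not entirely routine: $\mu(\partial B)=0$ does not by itself give $\mu(A^{-1}\partial B)=0$ for a.e.\ $A$; the paper's statement already elides this, and in the paper's own application $B$ is a half-line, for which the check is immediate.
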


\begin{proof}[Proof of Lemma \ref{MVLimit}]
We split $\mathbf{X}$ into positive and negative parts, $\mathbf{X}=\mathbf{X}^+-\mathbf{X}^-$, where $\mathbf{X}^+=(X_1^+,\ldots, X_d^+)$, $\mathbf{X}^-=(X_1^-,\ldots, X_d^-)$. % $X^+=\max(0,X)$ and $X^-=-\min(0,X)$, 
The infimum and supremum of the vector $\mathbf{A}_k$ is interpreted component-wise, i.e. $\sup_{k>M}\mathbf{A}_k=(\sup_{k>M}A_k^1,\ldots,\sup_{k>M}A_k^d)$ and analogously for the infimum.
Fix $M>0$. For $n>M$ we have, 
\begin{align}
P(\mathbf{A}_n \mathbf{X} >\lambda_n x)&=P(\mathbf{A}_n (\mathbf{X}^+ -\mathbf{X}^-)>\lambda_nx)\nonumber\\
&\leq P(\sup _{k>M}\mathbf{A}_k\mathbf{X}^+ -\inf_{k>M}\mathbf{A}_k\mathbf{X}^->\lambda_n x)\nonumber\\
&=P((\sup _{k>M}\mathbf{A}_k,\inf_{k>M}\mathbf{A}_k) (\mathbf{X}^+,-\mathbf{X}^-)^{\textrm{T}}>\lambda_n x)\label{infsupUpper}.
\end{align}

\noindent The same argument also provides a lower bound, 
\begin{align}
P(\mathbf{A}_n \mathbf{X} >\lambda_n x)\geq P((\inf _{k>M}\mathbf{A}_k,\sup_{k>M}\mathbf{A}_k) (\mathbf{X}^+,-\mathbf{X}^-)^{\textrm{T}}>\lambda_n x)\label{infsupLower}.
\end{align}
The probability $P(\mathbf{A}_n \mathbf{X}  >\lambda_n x)/P(|\mathbf{X}|>\lambda_n)$ is thus bounded from above and below.
To determine these bounds, we need to show regular variation of the vector $(\mathbf{X}^+,-\mathbf{X}^-)^{\textrm{T}}$.

\medskip
Let $E_1=\overline{\mathbb{R}}^d\backslash\{\mathbf{0} \}$ and $E_2=\{\mathbf{z}'\in\overline{\mathbb{R}}^{2d}\backslash\{\mathbf{0} \}:\mathbf{z}'=(\mathbf{z}^+ ,-\mathbf{z}^- )^{\textrm{T}},\mathbf{z}\in\overline{\mathbb{R}}^d\backslash\{\mathbf{0} \}\} $ and define the continuous transformation 
\begin{align*}
T:E_1&\longrightarrow E_2\\
\mathbf{x}& \longmapsto (\mathbf{x}^+, -\mathbf{x}^-)^{\textrm{T}}.
\end{align*}
Any relatively compact set $K_2$ of $E_2$ is of the form
\begin{align*}
K_2=\{\mathbf{z}'=(\mathbf{z}^+,-\mathbf{z}^-)\in \overline{\mathbb{R}}^{2d}\backslash\{\mathbf{0} \}: \mathbf{z}\in \overline{\mathbb{R}}^{d}\backslash\{\mathbf{0} \} \}, 
\end{align*}
 bounded away from $\mathbf{0}$, i.e.~$\mathbf{0}\notin\overline{K}_2$. Since $\mathbf{z}'\neq \mathbf{0}\Rightarrow\mathbf{z}\neq \mathbf{0}$, it is obvious that the inverse images of these sets in $\overline{\mathbb{R}}^{d}\backslash\{\mathbf{0} \}$ are bounded away from $\mathbf{0}$ as well.

\medskip Hence, if $K_2$ is compact in $\overline{\mathbb{R}}^{2d}\backslash\{\mathbf{0} \}$  then  $K_1=T^{-1}(K_2)$ is compact in \mbox{$\overline{\mathbb{R}}^d\backslash\{\mathbf{0} \}$}. Therefore, vague convergence of a sequence of measures $\mu_n$ on $E_1$ implies vague convergence of the induced measures $\hat{\mu}_n=\mu_n\circ T^{-1}$ on $E_2$. Specifically, since $|T(\mathbf{x})|=|\mathbf{x}|$ and $T(a\mathbf{x})=aT(\mathbf{x})$ for any $a>0$,
\begin{align*}
\frac{P(T(\mathbf{X})  \in \lambda_n B)}{P(|T(\mathbf{X})|>\lambda_n)}=\frac{P(\mathbf{X}  \in T^{-1} (\lambda_n B))}{P(|\mathbf{X}|>\lambda_n)}=\frac{P(\mathbf{X}  \in\lambda_n T^{-1}(B))}{P(|\mathbf{X}|>\lambda_n)}\xrightarrow[]{v}\mu(T^{-1}(B)),
\end{align*}
Therefore, the vector $T(\mathbf{X})=(\mathbf{X}^+,-\mathbf{X}^-)^{\textrm{T}}$ is regularly varying. 

\medskip
Since, $E\sup_n |\mathbf{A}_n|_{\infty}<\infty$ it follows that $E|(\sup _{k>M}\mathbf{A}_k,\inf_{k>M}\mathbf{A}_k)|_{\infty}<\infty$, so we can use the multivariate version of Breiman's lemma to determine the bounds (\ref{infsupUpper}) and (\ref{infsupLower}). This yields
\begin{align}
  &E\big(\mu\circ (\inf_{k>M} \mathbf{A}_k,\sup_{k>M} \mathbf{A}_k )^{-1}(1,\infty)\big) x^{-\alpha}\nonumber \\
& \quad \leq \liminf_{n\to\infty}\frac{P(\mathbf{A}_n \mathbf{X} >\lambda_n x)}{P(|\mathbf{X}|>\lambda_n)}  \leq \limsup_{n\to\infty}\frac{P(\mathbf{A}_n \mathbf{X} >\lambda_n x)}{P(|\mathbf{X}|>\lambda_n)} \label{bounds3}\\
&\quad\leq  E\big(\mu\circ (\sup_{k>M} \mathbf{A}_k,\inf_{k>M} \mathbf{A}_k)^{-1}(1,\infty)\big) x^{-\alpha}\nonumber.
\end{align}
Since $\mathbf{A}_n\xrightarrow[n\to\infty]{\textrm{a.s.}}\mathbf{A}$ we have $\inf_{k>M}\mathbf{A}_k\xrightarrow[M\to\infty]{\textrm{a.s.}} \mathbf{A}$ and $\sup_{k>M}\mathbf{A}_k\xrightarrow[M\to\infty]{\textrm{a.s.}} \mathbf{A}$. 
It remains to verify that we can evaluate these limits inside the expectations. We have
\begin{align*}
\mu\circ (\inf_{k>M} \mathbf{A}_k, \sup_{k>M} \mathbf{A}_k)^{-1}(1,\infty)&\leq \mu\circ (\sup_{k>M} \mathbf{A}_k,\inf_{k>M} \mathbf{A}_k)^{-1}(1,\infty)\\
&\leq\mu\circ (\sup_{k} \mathbf{A}_k,\inf_{k} \mathbf{A}_k)^{-1}(1,\infty)
\end{align*}
and
\begin{align*}
&E\mu\circ (\sup_{k} \mathbf{A}_k,\inf_{k} \mathbf{A}_k)^{-1}(1,\infty)\\
&\quad=\,E\mu(\mathbf{z}\in\mathbb{R}^d:(\sup_{k} \mathbf{A}_k,\inf_{k} \mathbf{A}_k)(\mathbf{z}^+,-\mathbf{z}^- )^{\textrm{T}}>1)\\
&\quad\leq\,E\mu(\mathbf{z}\in\mathbb{R}^d:(\sup_{k} |\mathbf{A}_k|_{\infty})\mathbf{1}_{2d}^{\textrm{T}} (\mathbf{z}^+,\mathbf{z}^- )^{\textrm{T}}>1)\\
&\quad=\,E(\sup_{k} |\mathbf{A}_k|_{\infty})^{\alpha}\mu(\mathbf{z}\in\mathbb{R}^d:\mathbf{1}_{2d}^{\textrm{T}} (\mathbf{z}^+,\mathbf{z}^- )^{\textrm{T}}>1)\\
&\quad=\,E(\sup_{k} |\mathbf{A}_k|_{\infty})^{\alpha}\mu(\mathbf{z}\in\mathbb{R}^d:\mathbf{1}_{d}^{\textrm{T}}|\mathbf{z}|>1)<\infty,
\end{align*}
with $|\mathbf{z}|=(|z_1|,\ldots,|z_d|)$.
Hence, by Dominated Convergence,
\begin{align*}
\lim_{M\to\infty}&E\mu(\mathbf{z}\in\mathbb{R}^d:(\sup_{k>M} \mathbf{A}_k,\inf_{k>M} \mathbf{A}_k)(\mathbf{z}^+,-\mathbf{z}^- )^{\textrm{T}}>1)\\
&=E\mu(\mathbf{z}\in\mathbb{R}^d:(\mathbf{A}, \mathbf{A})(\mathbf{z}^+,-\mathbf{z}^- )^{\textrm{T}}>1)\\
&=E\mu(\mathbf{z}\in\mathbb{R}^d:\mathbf{A}\mathbf{z}>1).
\end{align*}
A similar calculation applies to the lower bound in equation (\ref{bounds3}), with the same limit. Letting $M\to\infty$ in that equation yields the conclusion.
\end{proof}

%\begin{proof}[Proof of Lemma \ref{MGIneq}]
%Denote $S_{n}^{\textrm{abs}}=\sum_i^n |X_i|$.
%We have that  $\ldots,(|X_1|+|X_2|)/2, |X_1|$ is a martingale i.e.~$-S_{-k}^{\textrm{abs}}/k, k=\ldots,-2,-1%$ is a martingale. It follows from Doob (1953) p.~317 that

%\begin{align*}
%E\sup_{k\leq-1} \left(\frac{S_{-k}^{\textrm{abs}}}{-k}\right)^r\leq\left(\frac{r}{r-1}\right)^r E|S_1|^r.
%\end{align*}

%Since $\sup_n |S_n/n|^r\leq \sup_n (S_{n}^{\textrm{abs}}/n)^r$ the result follows.

%\end{proof}

\begin{proof}[Proof of Lemma \ref{LDLemma}]
We first note that if $U$ and $V$ are independent random variables, we have
 \begin{align*}
  P(U+V>x)\ge P(U>(1+\delta)x)P(|V|<\delta x)+P(|U|<\delta x)P(V>(1+\delta)x).
\end{align*}
Therefore, setting $U=n\mathbf{A}_n\mathbf{X}\,\,$ and $\,\, V=\sum_{i=1}^nY_i$, we get
\begin{align}\label{bound1}
\overline{F}^*(x)&\geq \big(\overline{F}_1((1+\delta)x)P(|\sum_{i=1}^nY_i|<\delta x) \nonumber\\
&+\overline{F}_2((1+\delta)x)P(|n\mathbf{A}_n\mathbf{X}|<\delta x)\big).
\end{align}
Furthermore, since for $\delta\in(0,1/2)$ we have
\begin{align*}
 \{U+V>x\} \subset \{U>(1-\delta)x \}\cup \{V>(1-\delta)x\}\cup\{U>\delta x, V>\delta x \},
\end{align*}
it follows that
\begin{align}\label{bound2}
\overline{F}^*(x)\leq \overline{F}_1((1-\delta)x)+\overline{F}_2((1-\delta)x)+\overline{F}_1(\delta x)\overline{F}_2(x).
\end{align}
Relation (\ref{Limit1}) is then obtained by dividing both sides in \eqref{bound1} and \eqref{bound2} by $\overline{F}_1(\lambda_n )$, and inverting.

\medskip
The lower bound consists of two parts. The first part is
\begin{align*}
&\lim_{n\to\infty}\frac{\overline{F}_1((1+\delta)x\lambda_n )}{\overline{F}_1(\lambda_n )}P(|\sum_{i=1}^nY_i)|<\delta\lambda_nx)\\
&\quad=\lim_{n\to\infty}\frac{\overline{F}_1((1+\delta)\lambda_n x)}{\overline{F}_{|X|}(\lambda_n/n)}\frac{\overline{F}_{|X|}(\lambda_n/n)}{\overline{F}_1(\lambda_n )}P(|\sum_{i=1}^nY_i)|<\delta\lambda_nx)\\
&\quad=\,\,x^{-\alpha_X}(1+\delta)^{-\alpha_X},
\end{align*}
where we have used Lemma \ref{MVLimit} and the fact that $n /\lambda_n\to 0$, as $n\to\infty$, i.e. $\lambda_n$ is in the large deviation region which imlpies that (cf.~Proposition 3.1 in Mikosch and Nagaev (1998))  
\begin{align*}
\lim_{n\to\infty}P(|\sum_{i=1}^nY_i)|<\delta\lambda_n)=1
\end{align*}
and
\begin{align*}
\lim_{n\to\infty}P(|n\mathbf{A}_n\mathbf{X}|<\delta \lambda_n)=1.
\end{align*}

The second part is
\begin{align*}
&\lim_{n\to\infty} \frac{\overline{F}_2((1+\delta)\lambda_nx )}{\overline{F}_1(\lambda_n )}P(|n\mathbf{A}_n\mathbf{X}|<\delta\lambda_nx)\\
&\quad=\lim_{n\to\infty} \frac{\overline{F}_2((1+\delta)\lambda_n x)}{\overline{F}_{|X|}(\lambda_n/n)} \frac{\overline{F}_{|X|}(\lambda_n/n)}{\overline{F}_1(\lambda_n )}P(|n\mathbf{A}_n\mathbf{X}|<\delta\lambda_nx)\\
&\quad=\,\,Q\big((1+\delta)x\big)^{-\alpha_{Y}}\big(\mu(\mathbf{A}^{-1}(1,\infty))\big)^{-1},
\end{align*}
using Assumption (\ref{Assumption1}) and Lemma \ref{MVLimit}.

\medskip
The upper bound is treated similarly, although it consists of three parts.
The first part is treated using Lemma \ref{MVLimit} as above.
The second part is
\begin{align*}
&\lim_{n\to\infty} \frac{\overline{F}_2((1-\delta)\lambda_n x)}{\overline{F}_1(\lambda_n )}\\
&\quad=\lim_{n\to\infty} \frac{\overline{F}_2((1-\delta)\lambda_nx )}{\overline{F}_{|X|}(\lambda_n/n)} \frac{\overline{F}_{|X|}(\lambda_n/n)}{\overline{F}_1(\lambda_n )}\\
&\quad=\,\,Q\big((1-\delta)x\big)^{-\alpha_{Y}}\big(\mu(\mathbf{A}^{-1}(1,\infty))\big)^{-1}.
\end{align*}
The third and last part is
\begin{align*}
&\lim_{n\to\infty} \underbrace{\frac{\overline{F}_1(\delta\lambda_n z)}{\overline{F}_1(\lambda_n z)}}_{\to (z\delta)^{-\alpha_X}} \underbrace{\overline{F}_2(\delta\lambda_n )}_{\to 0}=0.
\end{align*}
Hence, with $\mu_{A^{-1}}=\mu(\mathbf{A}^{-1}(1,\infty))$, it follows that
\begin{align*}
\frac{1}{\big((1-\delta)z\big)^{-\alpha_X}+Q\big((1-\delta)x\big)^{-\alpha_{Y}}\slash\mu_{A^{-1}}}&\leq\liminf_{n\to\infty} \frac{\overline{F}_1(\lambda_n)}{\overline{F}^*(\lambda_nx)}\\ & \leq\limsup_{n\to\infty} \frac{\overline{F}_1(\lambda_n)}{\overline{F}^*(\lambda_nx)}  \\&\leq \frac{1}{\big((1+\delta)x\big)^{-\alpha_X}+Q\big((1+\delta)x\big)^{-\alpha_{Y}}\slash\mu_{A^{-1}}}.
\end{align*}
Letting $\delta\to0$ proves the first relation.
The second relation is shown analogously.
\end{proof}

%%%%%%%%%%%%%%%%%%%%%%%LDP for Processes-  Proofs%%%%%%%%%%%%%%%%%%%%%%%%

%%%%%%%%%%%%Proof of Proposition 1%%%%%%%%%%%%%%%%%%%%%%%%%%%%
The following proof of Theorem \ref{Prop} relies on several results from the work by Hult and Lindskog (2005), adapted to our conditions. All the arguments in their proofs apply, with obvious modifications.
\begin{proof}[Proof of Theorem \ref{Prop}]
By Theorem \ref{LDTheorem}, we have that
\begin{align*}
\lim_{n\to\infty} \gamma_n P(\lambda_n^{-1}S_n(1)>x)=\tilde{\mu}(x,\infty),
\end{align*}
where $\tilde{\mu}$ is given by \eqref{FactorDecomp2} and $\gamma_n^{-1}=P(|\mathbf{F}_d(1)|>\lambda_n/n)$.
Since both $\mathbf{F}_d$ and $\varepsilon$ are L\'evy-processes, we also have
\begin{align*}
\lim_{n\to\infty} \gamma_n P(\lambda_n^{-1}S_n(t)>x)=t\tilde{\mu}(x,\infty)
\end{align*}
for every $t\in[0,1]$. Furthermore, 
\begin{align*}
\tilde{m}_{\delta}(B_{0,\varepsilon}^c)-\tilde{m}_{0}(B_{0,\varepsilon}^c)&=\delta\tilde{\mu}(y\in\mathbb{R}: |y|>x)\\
\tilde{m}_{1}(B_{0,\varepsilon}^c)-\tilde{m}_{1-\delta}(B_{0,\varepsilon}^c)&=\delta\tilde{\mu}(y\in\mathbb{R}: |y|>x).
\end{align*}
Finally, we have
\begin{align*}
\alpha^n_{\lambda_n,1}(1)&=\sup\{P^n_{s,t}(x,B_{x,\lambda_n}^c):x\in\mathbb{R};s,t\in[0,1];t-s\in[0,1] \}\\
&=P(|S_n(1)-0|>\lambda_n)\to0,
\end{align*} 
as $n\to\infty$, since $\lambda_n$ is in the large deviation region.
The conditions of Theorem 13 in Hult and Lindskog (2005) are hence satisfied. This proves the first part of Proposition \ref{Prop}.
It remains to show that $\tilde{m}$ puts all its mass on step functions with one step. 

\medskip Let $B(p,\epsilon,[0,1])=\{\mathbf{x}\in D([0,1],\mathbb{R}^d): \mathbf{x}\textrm{ has $\epsilon$-oscillation $p$ times in }[0,1]\}$, where, for $\epsilon>0$ and $p$ a positive integer, the process $\mathbf{x}\in D([0,1],\mathbb{R}^d)$ is said to have $\epsilon$-oscillation $p$ times in $[0,1]$ if there exist $t_0,\ldots,t_p\in [0,1]$ with $t_0<\ldots <t_p$ such that $|\mathbf{x}_{t_i} -\mathbf{x}_{t_{i-1}}|>\epsilon$ for $1=1,\ldots,p$.

\medskip\noindent Using Lemma 21 in Hult and Lindskog (2005), we get
\begin{align*}
\liminf_{n\to\infty} \gamma_n P(S_n\in B(2,\lambda_n \epsilon,[0,1]))=0.
\end{align*}
Since the convergence of $\gamma_n P(\lambda_n^{-1} S_n\in B)$ to $\tilde{m}(B)$ is equivalent to  
$$
\liminf_{n\to\infty} \gamma_n P(\lambda_n^{-1} S_n\in G)\geq \tilde{m}(G)
$$ 
for all open and bounded $G$, and $G=B(2,\epsilon,[0,1])$ is open, we have that $\tilde{m}(B(2,\epsilon,[0,1]))=0$ for all $\epsilon>0$. It follows that
\begin{align*}
\tilde{m}(\bigcup_{\epsilon\in\mathbb{Q}, \epsilon>0} B(2,\epsilon,[0,1]))=0.
\end{align*}
Using that
%\begin{align*}
%\bigcup_{\epsilon\in\mathbb{Q}, \epsilon>0} B(2,\epsilon,[0,1]))=(\overline{D}_0\slash D)\cup \mathcal{V}_0^c,
%\end{align*}
\begin{align*}
\mathcal{V}_0^c \subset \bigcup_{\epsilon\in\mathbb{Q}, \epsilon>0} B(2,\epsilon,[0,1])),
\end{align*}
we conclude that $\tilde{m}(\mathcal{V}_0^c)\leq \tilde{m}(\bigcup_{\epsilon\in\mathbb{Q}, \epsilon>0} B(2,\epsilon,[0,1]))=0$.
\end{proof}

%%%%%%%%%%%%%%%%End proof of Proposition 1%%%%%%%%%%%%%%%%%%%%%%%%
\medskip
\medskip

%\acks
\section*{Acknowledgements}
The authors would like to thank Allan Gut and Filip Lindskog for valuable discussions as well as comments on the paper. Financial support from the G\"oran Collert Foundation is gratefully acknowledged.

\end{document}